\documentclass{kms-c}

\issueinfo{}% volume number
  {}%        % issue number
  {}%        % month
  {}%     % year
\pagespan{1}{}
\copyrightinfo{}%              % copyright year
  {Korean Mathematical Society}% copyright holder
\usepackage{graphicx}
\allowdisplaybreaks

\theoremstyle{plain}
\newtheorem{theorem}{Theorem}[section]
\newtheorem{proposition}[theorem]{Proposition}
\newtheorem{lemma}[theorem]{Lemma}
\newtheorem{corollary}[theorem]{Corollary}

\theoremstyle{definition}
\newtheorem*{definition}{Definition}
\newtheorem{example}[theorem]{Example}

\theoremstyle{remark}
\newtheorem{remark}[theorem]{Remark}

\begin{document}

\title[On the Matlis Reflexive Modules]
{On the Matlis Reflexive Modules}

\author[B. Sadeqi]{Behruz Sadeqi}
\address{Behruz Sadeqi \\ Department of Mathematics, Mara.C.,
	 \\ Islamic Azad University \\ Marand, Iran}
\email{behruz.sadeqi@iau.ac.ir}

\subjclass[2020]{13E05, 14C45, 13E99}
\keywords{Matlis duality, reflexive modules, Krull-Schmidt category, minimax modules, local cohomology}

\begin{abstract}
Matlis duality, first introduced by Eben Matlis in 1958, stands as one of the most elegant and powerful tools in commutative algebra, establishing a striking contravariant equivalence between Artinian and Noetherian modules over complete local rings. In this paper, we undertake a thorough and systematic investigation of Matlis reflexive modules --- those modules for which the canonical evaluation homomorphism into the double Matlis dual is an isomorphism. Our exposition begins with a detailed historical narrative, tracing the intellectual trajectory from classical Pontryagin duality through Grothendieck's dualizing complexes to the modern formulation of Matlis duality. We then develop the core theory from first principles, demonstrating that the class of Matlis reflexive modules constitutes a Krull-Schmidt category --- a result that generalizes the classical decomposition theorems for modules of finite length. Over complete Noetherian local rings, we prove that Matlis reflexive modules coincide precisely with the class of minimax modules (those possessing a Noetherian submodule whose quotient is Artinian). We establish full closure properties, including stability under submodules, quotients, extensions, and finite direct sums, with rigorous and self-contained proofs. The paper also surveys recent applications to generalized local cohomology, change-of-rings results, and connections to pure-injective modules and linear compactness, and concludes with a discussion of open problems and future research directions.
\end{abstract}

\maketitle

\section{Introduction}

The theory of duality occupies a central position in modern algebra, serving as a unifying framework that connects seemingly disparate areas of mathematics. The origins of duality in algebra can be traced to Pontryagin's seminal work in the 1930s on locally compact abelian groups, where the dual of a group \( G \) is defined as \( \operatorname{Hom}(G, \mathbb{R}/\mathbb{Z}) \). This duality establishes a perfect correspondence between compact and discrete abelian groups, revealing deep structural symmetries that have since become paradigmatic in algebraic thinking.

In the decades that followed, algebraists sought analogous duality theories for modules over rings. The first major breakthrough in this direction came from the work of Grothendieck, who introduced the notion of dualizing complexes in the context of coherent sheaves and algebraic geometry. These complexes provided a powerful framework for understanding duality in derived categories, but the explicit module-theoretic realization remained elusive for general rings.

It was Eben Matlis who, in his landmark 1958 paper \cite{Matlis1958}, made the transformative observation that for a complete local ring \( (R, \mathfrak{m}) \), the injective hull \( E = E_R(R/\mathfrak{m}) \) of the residue field possesses a remarkable property: the contravariant functor 
\[
M \longmapsto \operatorname{Hom}_R(M, E)
\]
establishes an equivalence between the category of Artinian \( R \)-modules and the category of Noetherian \( R \)-modules. This was not merely a formal generalization of the vector space dual over a field; it was a deep structural theorem that revealed how the geometry of the maximal ideal controls the duality between finiteness conditions.

Matlis's theorem can be viewed as the local analogue of Pontryagin duality, with the injective hull \( E_R(R/\mathfrak{m}) \) playing the role of the circle group \( \mathbb{R}/\mathbb{Z} \). Over a field \( k \), the injective hull is simply \( k \) itself, and Matlis duality reduces to the familiar vector space duality \( \operatorname{Hom}_k(-, k) \), which establishes the classical equivalence between finite-dimensional vector spaces and their duals.

For arbitrary Noetherian rings (not necessarily local), the appropriate dualizing module is the direct sum
\[
E = \bigoplus_{\mathfrak{m} \in \operatorname{Specm}(R)} E_R(R/\mathfrak{m}),
\]
which is the minimal injective cogenerator for the category of \( R \)-modules. This module has the universal property that every \( R \)-module embeds into a product of copies of \( E \), and every injective module is a direct summand of some product of copies of \( E \).

Despite the elegance and power of Matlis duality, the systematic study of the reflexive objects --- those modules \( M \) for which the evaluation map 
\[
\delta_M : M \longrightarrow (M^\vee)^\vee, \quad \delta_M(x)(f) = f(x)
\]
is an isomorphism --- remained relatively dormant for several decades. The early literature focused primarily on finitely generated and Artinian modules, both of which were known to be reflexive over complete local rings.

A turning point occurred in the mid-1990s with the foundational work of Belshoff, Slattery, and Wickham \cite{Belshoff1996a, Belshoff1996b}. These authors initiated a systematic investigation of Matlis reflexive modules beyond the classical finiteness regimes, establishing that the class of reflexive modules is closed under submodules, quotients, and extensions, and that local cohomology modules of reflexive modules exhibit desirable finiteness properties. Their work revealed that Matlis reflexive modules form a remarkably robust class, stable under many natural algebraic operations.

The theory witnessed a dramatic advance with the work of Krause \cite{Krause2025}, who proved that the category of Matlis reflexive modules is Krull-Schmidt. This means that every reflexive module decomposes uniquely (up to isomorphism and permutation) into finitely many indecomposable direct summands, each with a local endomorphism ring. This result generalizes the classical Krull-Schmidt theorem for modules of finite length and provides deep insight into the internal structure of reflexive modules.

Krause also established the definitive characterization of Matlis reflexive modules over complete local rings: they are precisely the minimax modules --- modules that contain a Noetherian submodule whose quotient is Artinian. This theorem unified and significantly generalized all previous partial characterizations, providing a clean and conceptually satisfying description of the reflexive class.

In recent years, the theory has expanded in several exciting directions. Dailey and Marley \cite{Dailey2017} have investigated the behavior of Matlis reflexivity under localization and completion, providing necessary and sufficient conditions for reflexivity to be preserved under change of rings. Mafi \cite{Mafi2009} and Khashyarmanesh--Khosh-Ahang \cite{Khashyarmanesh2008} have explored the connections between Matlis reflexivity and generalized local cohomology, establishing finiteness results for the Ext and local cohomology modules of reflexive modules. These developments have firmly established Matlis reflexive modules as an indispensable tool in the homological algebra of Noetherian rings.

The purpose of the present paper is to provide a self-contained, rigorous, and reader-friendly exposition of the core theory of Matlis reflexive modules. Our presentation emphasizes conceptual clarity, historical context, and precise proofs, with the aim of making this beautiful theory accessible to graduate students and researchers alike. We assume the reader has a basic familiarity with commutative algebra, including the language of Noetherian and Artinian rings, injective modules, and homological algebra.

Throughout this paper, \( R \) denotes a commutative Noetherian ring with identity, and all modules are assumed to be unitary. We denote by \( \operatorname{Specm}(R) \) the set of maximal ideals of \( R \). For each maximal ideal \( \mathfrak{m} \in \operatorname{Specm}(R) \), we let \( E_R(R/\mathfrak{m}) \) denote the injective hull of the simple module \( R/\mathfrak{m} \).

\begin{definition}
\label{def:duality}
Let 
\[
E = \bigoplus_{\mathfrak{m} \in \operatorname{Specm}(R)} E_R(R/\mathfrak{m})
\]
be the minimal injective cogenerator of the category of \( R \)-modules. For any \( R \)-module \( M \), the {Matlis dual} of \( M \) is defined as
\[
M^\vee = \operatorname{Hom}_R(M, E).
\]
The canonical homomorphism
\[
\delta_M : M \longrightarrow (M^\vee)^\vee, \qquad \delta_M(x)(f) = f(x)
\]
is called the {evaluation map} (or the {canonical biduality map}). The module \( M \) is said to be {Matlis reflexive} if \( \delta_M \) is an isomorphism.
\end{definition}

\begin{remark}
A crucial property of the injective cogenerator \( E \) is that it is faithfully injective: for every nonzero module \( M \), there exists an \( R \)-linear map \( M \to E \) that is nonzero on any given nonzero element. Consequently, the evaluation map \( \delta_M : M \to (M^\vee)^\vee \) is always injective. This fact, which is proved in \cite[Proposition 2.1]{Belshoff1996a}, ensures that reflexivity is a genuinely stronger condition than merely being isomorphic to a submodule of its double dual.
\end{remark}

\begin{lemma}[Adjunction isomorphism]
\label{lem:adjunction}
For any \( R \)-modules \( M \) and \( N \), there exists a natural isomorphism
\[
\operatorname{Hom}_R(M, N^\vee) \cong \operatorname{Hom}_R(N, M^\vee).
\]
\end{lemma}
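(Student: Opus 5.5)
Both sides will be identified with the module of $R$-bilinear maps $M \times N \to E$, or equivalently with $\operatorname{Hom}_R(M \otimes_R N, E)$. The tool is Hom--tensor adjunction together with the symmetry of the tensor product over the commutative ring $R$. Concretely, I would use the standard natural isomorphism
\[
\operatorname{Hom}_R(M \otimes_R N, E) \cong \operatorname{Hom}_R\bigl(M, \operatorname{Hom}_R(N, E)\bigr) = \operatorname{Hom}_R(M, N^\vee),
\]
sending $\varphi$ to the map $x \mapsto (y \mapsto \varphi(x \otimes y))$. Applying the same adjunction to $N \otimes_R M$ gives $\operatorname{Hom}_R(N \otimes_R M, E) \cong \operatorname{Hom}_R(N, M^\vee)$. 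I then compose with the isomorphism induced by the swap $M \otimes_R N \cong N \otimes_R M$, $x \otimes y \mapsto y \otimes x$.

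It is cleaner to write the resulting composite directly. Given $\alpha \in \operatorname{Hom}_R(M, N^\vee)$, define $\alpha^\ast \in \operatorname{Hom}_R(N, M^\vee)$ by
\[
\alpha^\ast(y)(x) = \alpha(x)(y), \qquad x \in M,\ y \in N.
\]
The checks proceed in this order:
\begin{enumerate}
\item $\alpha^\ast(y)$ is $R$-linear in $x$, because $\alpha$ is $R$-linear.
\item $\alpha^\ast$ is $R$-linear in $y$, because each $\alpha(x)$ is $R$-linear. Commutativity of $R$ is what makes $N^\vee$ and $M^\vee$ $R$-modules at all, via $(rf)(z) = r f(z) = f(rz)$.
\item The assignment $\alpha \mapsto \alpha^\ast$ is $R$-linear.
\item The same recipe in the other direction gives an inverse, since $(\alpha^\ast)^\ast = \alpha$ by inspection.
\end{enumerate}

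For naturality, take $u : M' \to M$ and $v : N' \to N$. The identity $(u^\vee$-twisted$)$ is checked elementwise. Explicitly, for $\alpha : M \to N^\vee$ one has
\[
(v^\vee \circ \alpha \circ u)^\ast = u^\vee \circ \alpha^\ast \circ v,
\]
and this reduces to the tautology $\alpha(u(x'))(v(y')) = \alpha(u(x'))(v(y'))$.

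None of the steps is a real obstacle. The only point needing care is bookkeeping of the module structures on the duals: one must confirm that the $R$-action on $N^\vee$ is compatible with the one on $M^\vee$, so that the isomorphism is $R$-linear and not merely additive. This again uses only commutativity of $R$. Nothing specific to $E$ being injective or a cogenerator is needed; the argument works for any module in place of $E$.
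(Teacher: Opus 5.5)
Your proposal is correct and follows essentially the same route as the paper: both identify the two sides through $R$-bilinear maps $M \times N \to E$ (equivalently $\operatorname{Hom}_R(M \otimes_R N, E)$) and use the swap $g(n)(m) = f(m)(n)$. Your version spells out the linearity, inverse, and naturality checks that the paper leaves implicit, and your remark that nothing about $E$ beyond its being an $R$-module is used is accurate.
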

\begin{proof}
This is the standard hom-tensor adjunction in the special case where the second variable is the injective cogenerator \( E \). Explicitly, an \( R \)-linear map \( f : M \to \operatorname{Hom}_R(N, E) \) corresponds to a balanced bilinear map \( M \times N \to E \) given by \( (m, n) \mapsto f(m)(n) \). By symmetry of the bilinear map, this corresponds uniquely to an \( R \)-linear map \( g : N \to \operatorname{Hom}_R(M, E) \), defined by \( g(n)(m) = f(m)(n) \). The correspondence is natural in both variables. 
\end{proof}

\begin{remark}
The adjunction isomorphism of Lemma \ref{lem:adjunction} is a powerful tool that allows us to transfer information between duals of different modules. In particular, it implies that the Matlis dual functor \( (-)^\vee \) is contravariant and exact, since \( E \) is injective.
\end{remark}

One of the most profound structural results about Matlis reflexive modules is that they form a Krull-Schmidt category, a property that guarantees the existence and uniqueness of decompositions into indecomposable summands. This section is devoted to a detailed exposition of this theorem, following the proof strategy of Krause \cite{Krause2025}.

\begin{theorem}[Krause \cite{Krause2025}]
\label{thm:krull-schmidt}
The class of Matlis reflexive modules over a Noetherian ring \( R \) forms a Serre subcategory that satisfies the Krull-Schmidt property. More precisely:
\begin{enumerate}
\item For any short exact sequence \( 0 \to M' \to M \to M'' \to 0 \), the module \( M \) is Matlis reflexive if and only if both \( M' \) and \( M'' \) are Matlis reflexive.
\item Every Matlis reflexive module admits an essentially unique decomposition
\[
M \cong M_1 \oplus M_2 \oplus \cdots \oplus M_n
\]
into finitely many indecomposable modules \( M_i \), each of which has a local endomorphism ring. The decomposition is unique up to isomorphism and permutation of the summands.
\end{enumerate}
\end{theorem}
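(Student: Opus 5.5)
The plan is to handle the two parts separately. Part (1) is a diagram chase built on exactness of \( (-)^\vee \). Part (2) combines the closure properties from part (1) with the fact that Matlis duals are pure-injective.

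For part (1), I will apply the Matlis dual twice to \( 0 \to M' \to M \to M'' \to 0 \). Since \( E \) is injective, \( (-)^\vee \) is exact (see the remark after Lemma \ref{lem:adjunction}). This gives an exact sequence \( 0 \to M'^{\vee\vee} \to M^{\vee\vee} \to M''^{\vee\vee} \to 0 \). The evaluation maps \( \delta_{M'}, \delta_M, \delta_{M''} \) form a morphism of short exact sequences, because \( \delta \) is a natural transformation. Each \( \delta \) is injective by the remark following the definition, so all kernels vanish. The snake lemma then yields an exact sequence
\[
0 \to \operatorname{coker}\delta_{M'} \to \operatorname{coker}\delta_M \to \operatorname{coker}\delta_{M''} \to 0 .
\]
Hence \( \operatorname{coker}\delta_M = 0 \) if and only if both outer cokernels vanish, which is exactly the Serre property claimed. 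In particular, reflexive modules are closed under direct summands and finite direct sums, so they form an idempotent-complete additive category.

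For part (2), I will proceed in three steps.
\begin{enumerate}
\item \emph{Pure-injectivity.} A reflexive \( M \) satisfies \( M \cong (M^\vee)^\vee = \operatorname{Hom}_R(M^\vee, E) \). Modules of the form \( \operatorname{Hom}_R(X, E) \) with \( E \) injective are pure-injective, because they are character-type duals. So every reflexive module is pure-injective.
\item \emph{Local endomorphism rings.} It is a standard fact that an indecomposable pure-injective module has a local endomorphism ring. Hence every indecomposable summand appearing in a decomposition of a reflexive module has local endomorphism ring.
\item \emph{Uniqueness.} Once a finite decomposition into modules with local endomorphism rings exists, uniqueness up to isomorphism and permutation follows from the Krull--Schmidt--Azumaya theorem.
\end{enumerate}

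The remaining and main issue is existence of a \emph{finite} decomposition. My first attempt will show that a reflexive module cannot contain an infinite direct sum \( \bigoplus_{i \in \mathbb{N}} N_i \) of nonzero submodules.

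By part (1), such a sum would itself be reflexive. Its dual is \( \prod_i N_i^\vee \), so the double dual is \( (\prod_i N_i^\vee)^\vee \). This double dual should be strictly larger than the image of \( \bigoplus_i N_i^{\vee\vee} \). To see this, choose nonzero \( f_i \in N_i^\vee \) and consider the submodule \( \prod_i R f_i \) together with its quotient by \( \bigoplus_i R f_i \). That quotient is nonzero, so there is a nonzero map from it to \( E \). By injectivity of \( E \), this map extends to a functional on \( \prod_i N_i^\vee \) that vanishes on \( \bigoplus_i N_i^\vee \). No element of \( \bigoplus_i N_i \subseteq \bigoplus_i N_i^{\vee\vee} \) induces such a functional, so \( \delta \) is not surjective on the sum, a contradiction.

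It follows that every reflexive module has finite Goldie (uniform) dimension. Now decompose \( M \) by splitting off summands repeatedly. This process must stop after at most the Goldie dimension of \( M \) many steps, since each proper splitting strictly increases the number of nonzero summands, all sitting inside \( M \) as a direct sum. The endpoint is a decomposition \( M_1 \oplus \cdots \oplus M_n \) into indecomposables. Each \( M_i \) is reflexive by part (1), hence pure-injective by step 1, hence has local endomorphism ring by step 2.

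The delicate point I expect to be the real obstacle is the non-surjectivity argument for infinite sums. I will need to check carefully that the extended functional is not an evaluation. If it fails in that form, I would instead work locally: use that \( E \) decomposes over \( \operatorname{Specm}(R) \), reduce to a single complete local ring \( \widehat{R}_{\mathfrak{m}} \), and appeal to the minimax description of reflexive modules, where finite Goldie dimension is clear.
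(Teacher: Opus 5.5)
Your proof is correct. Part (1) is the same snake-lemma argument as the paper's; you state explicitly the injectivity of the evaluation maps, which the paper uses tacitly. In part (2) you share with the paper the chain reflexive $\Rightarrow$ pure-injective $\Rightarrow$ indecomposable summands have local endomorphism rings, and you also use Krull--Schmidt--Azumaya for uniqueness.

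The genuine difference is how you obtain a \emph{finite} decomposition. The paper invokes the structure theorem for pure-injective modules (a pure-injective hull of a direct sum of indecomposables plus a superdecomposable part). It then uses Lemma~\ref{lem:no-infinite-sums} to remove the superdecomposable part and bound the index set. You bypass that theorem. From part (1) and the non-reflexivity of infinite direct sums you get that every reflexive module has finite uniform dimension, and naive splitting then terminates after at most that many steps.

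Your non-surjectivity argument does work, with two small points to state:
\begin{enumerate}
\item Before extending by injectivity of $E$, define the functional on $\prod_i Rf_i+\bigoplus_i N_i^\vee$, using $\prod_i Rf_i\cap\bigoplus_i N_i^\vee=\bigoplus_i Rf_i$. More simply, take any nonzero map $\prod_i N_i^\vee/\bigoplus_i N_i^\vee\to E$.
\item An element of $\bigoplus_i N_i$ whose evaluation kills $\bigoplus_i N_i^\vee$ is zero, because $E$ is a cogenerator.
\end{enumerate}

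This is more explicit than the paper's proof of Lemma~\ref{lem:no-infinite-sums}, which does not justify why $M^{\vee\vee}\to\prod_i M_i^{\vee\vee}$ must be onto. It also supplies a step the paper leaves implicit: an infinite family of indecomposables, or a nonzero superdecomposable part, yields an infinite direct sum of nonzero \emph{submodules}, which is reflexive by part (1). The paper's route places the result inside the general decomposition theory of pure-injective modules. Yours is elementary and self-contained, and it shows that $n$ is at most the uniform dimension of $M$. Your fallback via completion and minimax modules is therefore unnecessary.
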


The proof of this theorem requires two auxiliary results, which we now establish.

\begin{lemma}
\label{lem:pure-injective}
An \( R \)-module \( M \) is pure-injective if and only if it is a direct summand of a module of the form \( N^\vee \) for some \( R \)-module \( N \).
\end{lemma}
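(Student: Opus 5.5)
We prove the two implications separately. Throughout we use the standard fact that a module is pure-injective if and only if every pure monomorphism out of it splits. We also use that a short exact sequence $0 \to A \to B \to C \to 0$ is pure exactly when it stays exact after applying $-\otimes_R X$ for all $X$. Since $E$ is an injective cogenerator, this is equivalent to the dual sequence $0 \to C^\vee \to B^\vee \to A^\vee \to 0$ being split exact. The reason is that $(-\otimes X)^\vee \cong \operatorname{Hom}_R(X,(-)^\vee)$ by the adjunction underlying Lemma~\ref{lem:adjunction}, and a sequence of modules $D$ is exact if and only if $D^\vee$ is.

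\textbf{Duals are pure-injective.} Let $N$ be any module and let $\iota: A \to B$ be a pure monomorphism. We must show that restriction $\operatorname{Hom}_R(B,N^\vee)\to \operatorname{Hom}_R(A,N^\vee)$ is surjective. By hom-tensor adjunction, this map identifies with $(B\otimes N)^\vee \to (A\otimes N)^\vee$. Purity makes $A\otimes N \to B\otimes N$ injective, and exactness of $(-)^\vee$ (since $E$ is injective) then makes the dual map surjective. So $N^\vee$ is pure-injective. A direct summand of a pure-injective module is again pure-injective, because any extension problem for the summand is solved by composing the extension for the whole module with the projection. This proves the "if" direction.

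\textbf{Pure-injectives are summands of duals.} Let $M$ be pure-injective and take $N = M^\vee$. Then $\delta_M: M \to M^{\vee\vee} = N^\vee$ is injective by the Remark above, and the key step is to show it is a \emph{pure} monomorphism. Using the criterion above, it suffices to show that $\delta_M^\vee: M^{\vee\vee\vee} \to M^\vee$ is a split epimorphism. This is the standard triangle identity: the composite $\delta_M^\vee \circ \delta_{M^\vee}$ is the identity on $M^\vee$, which one checks directly from $\delta(x)(f)=f(x)$. Hence $\delta_M^\vee$ splits and $\delta_M$ is pure. Since $M$ is pure-injective, the pure monomorphism $\delta_M$ splits, so $M$ is a direct summand of $N^\vee$.

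The main obstacle is this purity step. It relies on the equivalence "pure iff the dual sequence splits", whose nontrivial direction passes through the characterization of purity via tensor exactness. If one prefers to avoid that equivalence, there is an alternative route that checks purity of $\delta_M$ directly by showing every finitely presented $X$ gives an injection $X\otimes M \to X\otimes M^{\vee\vee}$. This follows from the natural isomorphism $X\otimes M^{\vee\vee}\cong \operatorname{Hom}_R(X,M^\vee)^\vee$, valid for $X$ finitely presented, together with the injectivity of $\delta_{X\otimes M}$.
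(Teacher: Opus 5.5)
Your proof is correct and follows the same outline as the paper's. First, Matlis duals (and hence their direct summands) are pure-injective. Second, for pure-injective $M$ the evaluation map $\delta_M\colon M\to (M^\vee)^\vee$ is a pure monomorphism, so it splits. Your write-up is more complete than the paper's: the paper appeals to closure under products and summands for the first half and only asserts that $\delta_M$ is pure in the second. You prove the first half by hom--tensor adjunction, and the second by the identity $\delta_M^\vee\circ\delta_{M^\vee}=\mathrm{id}_{M^\vee}$ together with the criterion that a sequence is pure exactly when its Matlis dual splits.
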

\begin{proof}
It is a classical result that any Matlis dual module \( N^\vee = \operatorname{Hom}_R(N, E) \) is pure-injective; this follows from the fact that \( E \) is injective and that pure-injectivity is preserved under products and direct summands \cite[Proposition I.10.1]{Auslander1978}. For the converse, suppose \( M \) is pure-injective. Consider the canonical evaluation map \( \delta_M : M \to (M^\vee)^\vee \), which is injective by Remark 2. Moreover, it is a pure monomorphism: this follows from the fact that the map is the composition of the canonical map into the double dual, and the double dual functor preserves pure exact sequences. Since \( M \) is pure-injective, this pure monomorphism splits, and \( M \) is therefore a direct summand of the Matlis dual module \( (M^\vee)^\vee \). This completes the proof. 
\end{proof}

\begin{lemma}
\label{lem:no-infinite-sums}
Let \( M = \bigoplus_{i \in I} M_i \) be a Matlis reflexive module. Then \( M_i = 0 \) for all but finitely many indices \( i \in I \). Moreover, every nonzero Matlis reflexive module has at least one indecomposable direct summand.
\end{lemma}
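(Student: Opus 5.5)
We argue in two parts: first the finiteness of the support of the decomposition, then the existence of an indecomposable summand.

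\textbf{Finiteness.} Suppose $M=\bigoplus_{i\in I}M_i$ is Matlis reflexive and that infinitely many $M_i$ are nonzero. Since direct summands of reflexive modules are reflexive ($\delta$ is natural and commutes with finite direct sums, so $\delta_M$ restricts to $\delta_{N}$ on a summand $N$), we may pass to a countable subfamily and assume $M=\bigoplus_{n\ge 1}M_n$ with every $M_n\neq 0$. The dual functor converts sums into products: $M^\vee\cong\prod_n M_n^\vee$, and therefore
\[
(M^\vee)^\vee\cong\operatorname{Hom}_R\Bigl(\prod_n M_n^\vee,E\Bigr).
\]
This module contains $\bigoplus_n M_n^{\vee\vee}$, and it also contains functionals that do not factor through any finite subproduct. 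The plan is to produce such a functional explicitly.

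\begin{enumerate}
\item Let $\prod_n M_n^\vee\to\bigl(\prod_n M_n^\vee\bigr)/\bigl(\bigoplus_n M_n^\vee\bigr)$ be the projection. The quotient is nonzero because every $M_n^\vee\neq 0$, which holds since $E$ is a cogenerator.
\item Because $E$ is a cogenerator, there is a nonzero map from this quotient to $E$. Composing it with the projection gives a nonzero $\varphi\in(M^\vee)^\vee$ that vanishes on $\bigoplus_n M_n^\vee$.
\item Every element $\delta_M(x)$ with $x\in M$ has $x$ supported on finitely many $M_n$. Consequently $\delta_M(x)$ is determined by its restriction to $\bigoplus_n M_n^\vee$: if $f$ and $g$ agree on the finitely many relevant coordinates, then $f(x)=g(x)$.
\item If $\varphi=\delta_M(x)$, then $\delta_M(x)$ vanishes on $\bigoplus_n M_n^\vee$. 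Since each component $x_n$ is detected by some $f\in M_n^\vee$, this forces $x=0$, and hence $\varphi=0$, a contradiction.
\end{enumerate}
Thus $\delta_M$ is not surjective, so $M$ is not reflexive.

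The main obstacle is step 1: showing that the quotient $\prod/\bigoplus$ is nonzero. This is clear, since any element with infinitely many nonzero coordinates lies outside the sum. Steps 2 through 4 then follow directly from the cogenerator property.

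\textbf{Existence of an indecomposable summand.} Let $M\neq 0$ be reflexive. If $M$ is indecomposable we are done. Otherwise write $M=A_1\oplus B_1$ with $A_1,B_1\neq0$, and decompose $B_1$ again if it is decomposable, and so on. If this process never terminates, then $M$ contains $A_1\oplus A_2\oplus\cdots$ together with a chain of summands $B_1\supsetneq B_2\supsetneq\cdots$. The sum $\bigoplus_n A_n$ is a submodule of $M$, and by part (1) of Theorem~\ref{thm:krull-schmidt} it is reflexive. That theorem is logically available here, but to avoid circularity, note that the first part of this proof only used that the module at hand is reflexive, so we prefer a direct argument.

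The direct argument runs as follows. Each $A_1\oplus\cdots\oplus A_n$ is a summand of $M$ with complement $B_n$, so $M^\vee$ surjects onto $\prod_{k\le n}A_k^\vee$ compatibly in $n$. Hence $M^\vee$ surjects onto $\prod_n A_n^\vee$ via the inverse limit of the projections. Dualizing, $\bigl(\prod_n A_n^\vee\bigr)^\vee$ embeds in $M^{\vee\vee}\cong M$, where the isomorphism uses reflexivity. We then want to identify the image of $\bigoplus_n A_n^{\vee\vee}=\bigoplus_n A_n$ and the extra element $\varphi$ constructed above inside $M$. Such an element would have to be nonzero on $\prod_n A_n^\vee$, yet its restriction to each coordinate is zero. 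We then derive a contradiction from the fact that any $x\in M$ pairs with $\prod_n A_n^\vee$ through the projections $M\to A_1\oplus\cdots\oplus A_n$, which are compatible with the splitting $M=A_1\oplus\cdots\oplus A_n\oplus B_n$.

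The bookkeeping of this last compatibility is the step I expect to be most delicate. If it resists, the fallback is to invoke the submodule closure from Theorem~\ref{thm:krull-schmidt}(1) on $\bigoplus_n A_n\subseteq M$ and apply the first part of the lemma.
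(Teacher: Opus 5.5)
Your proposal is correct once the fallback is taken as the argument for the second part. It follows a different and more elementary route than the paper in both halves.

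\textbf{Finiteness.} The paper double-dualizes $\bigoplus_i M_i\hookrightarrow\prod_i M_i$ and asserts, without justification, that reflexivity forces the resulting inclusion $M\hookrightarrow\prod_i M_i$ to be onto. You supply the actual mechanism. Because $\prod_n M_n^\vee/\bigoplus_n M_n^\vee\neq 0$ and $E$ is a cogenerator, there is a nonzero $\varphi\in(M^\vee)^\vee$ vanishing on $\bigoplus_n M_n^\vee$. Each $\delta_M(x)$ depends on only finitely many coordinates, so $\varphi\notin\operatorname{im}\delta_M$.

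\textbf{Indecomposable summand.} The paper uses pure-injectivity (Lemma~\ref{lem:pure-injective}) together with the claim that every nonzero pure-injective module decomposes into indecomposables. That claim is false in general: superdecomposable pure-injectives exist, and the paper's own proof of Theorem~\ref{thm:krull-schmidt} allows a summand with no indecomposable summands. Ruling out such a reflexive module is exactly what must be shown, and your splitting argument does this. If no nonzero summand of $M$ were indecomposable, the splittings $B_{n-1}=A_n\oplus B_n$ give an infinite direct sum $\bigoplus_n A_n\subseteq M$ of nonzero modules. This sum is reflexive by Theorem~\ref{thm:krull-schmidt}(1), contradicting the first part. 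There is no circularity: the snake-lemma proof of that Serre property uses only exactness of $(-)^\vee$ and injectivity of every $\delta$, not this lemma. Your route needs nothing about pure-injective modules, while the paper's route, once repaired, would still need an argument like yours for the superdecomposable case.

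\textbf{The ``direct argument'' you try first does not work as written.} You should drop it or repair it.
\begin{enumerate}
\item Surjectivity of the restriction $\rho\colon M^\vee\to\prod_n A_n^\vee$ comes from injectivity of $E$ applied to $\bigoplus_n A_n\subseteq M$. It does not follow from an inverse limit of surjections, which need not be surjective.
\item More seriously, it is false that every $x\in M$ pairs with $\prod_n A_n^\vee$ through the projections onto $A_1\oplus\cdots\oplus A_n$. Since $M\neq\bigoplus_n A_n$, an element can have a component in $\bigcap_n B_n$, where the finite-support trick of the first part gives nothing.
\end{enumerate}
A repair goes as follows.
\begin{enumerate}
\item Suppose $\delta_M(x)=\varphi\circ\rho$, with $\varphi\neq 0$ vanishing on $\bigoplus_n A_n^\vee$.
\item For each $j$, let $\pi_j$ be the projection of $M=A_1\oplus\cdots\oplus A_j\oplus B_j$ onto $A_j$. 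Testing $\delta_M(x)$ against $g\circ\pi_j$ shows that every $A_j$-component of $x$ vanishes.
\item Hence $x\in\bigcap_n B_n$, and therefore $Rx\cap\bigoplus_n A_n=0$.
\item Choose a family $(g_n)$ with $\varphi((g_n))\neq 0$. By injectivity of $E$ it extends to some $f\in M^\vee$ with $\rho(f)=(g_n)$ and $f(x)=0$.
\item Then $0=f(x)=\varphi(\rho(f))=\varphi((g_n))\neq 0$, which is the required contradiction.
\end{enumerate}
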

\begin{proof}
Assume that \( M = \bigoplus_{i \in I} M_i \) is Matlis reflexive. Applying the Matlis dual functor to the direct sum, we obtain
\[
M^\vee = \operatorname{Hom}_R\left(\bigoplus_{i \in I} M_i, E\right) \cong \prod_{i \in I} \operatorname{Hom}_R(M_i, E) = \prod_{i \in I} M_i^\vee.
\]
The canonical inclusion \( \bigoplus_{i \in I} M_i \hookrightarrow \prod_{i \in I} M_i \), when dualized and then double-dualized, corresponds under reflexivity to the inclusion
\[
M \cong (M^\vee)^\vee \hookrightarrow \prod_{i \in I} (M_i^\vee)^\vee \cong \prod_{i \in I} M_i.
\]
For \( M \) to be reflexive, this inclusion must be an isomorphism, which forces the direct sum and the product to coincide. In the category of \( R \)-modules, this happens if and only if \( M_i = 0 \) for almost all \( i \in I \), since an infinite direct sum is never isomorphic to the corresponding product unless all but finitely many summands are zero \cite[Lemma 2.3]{Krause2025}. The second assertion follows from the first: by Lemma \ref{lem:pure-injective}, \( M \) is pure-injective, and any nonzero pure-injective module has a decomposition into indecomposable summands. Since the set of summands must be finite by the first part, at least one indecomposable summand exists. 
\end{proof}

\begin{proof}[Proof of Theorem \ref{thm:krull-schmidt}]
We first establish the Serre subcategory property. Suppose \( 0 \to M' \xrightarrow{\alpha} M \xrightarrow{\beta} M'' \to 0 \) is exact. Since \( E \) is injective, the dual sequence
\[
0 \to (M'')^\vee \xrightarrow{\beta^\vee} M^\vee \xrightarrow{\alpha^\vee} (M')^\vee \to 0
\]
is also exact. Applying the dual functor again and using the naturality of the evaluation map, we obtain a commutative diagram with exact rows:
\[
\begin{array}{ccccccccc}
0 & \to & M' & \xrightarrow{\alpha} & M & \xrightarrow{\beta} & M'' & \to & 0 \\
 & & \downarrow{\delta_{M'}} & & \downarrow{\delta_M} & & \downarrow{\delta_{M''}} & & \\
0 & \to & (M')^{\vee\vee} & \to & M^{\vee\vee} & \to & (M'')^{\vee\vee} & \to & 0.
\end{array}
\]
By the Snake Lemma, \( \delta_M \) is an isomorphism if and only if both \( \delta_{M'} \) and \( \delta_{M''} \) are isomorphisms. This establishes the closure under submodules, quotients, and extensions.

For the Krull-Schmidt property, Lemma \ref{lem:pure-injective} shows that every reflexive module is pure-injective. A general theorem of pure-injective module theory states that any pure-injective module \( M \) admits a decomposition \( M = M' \oplus M'' \), where \( M' \) is a pure-injective envelope of a direct sum of indecomposable modules and \( M'' \) is continuous (having no indecomposable direct summands) \cite[Theorem 8.25]{Brodmann1998}. Lemma \ref{lem:no-infinite-sums} implies that \( M'' = 0 \) and that the set of indecomposable summands is finite. Each indecomposable pure-injective module has a local endomorphism ring \cite[Corollary 4.4]{Krause2025}. The uniqueness of the decomposition follows from the standard Krull-Schmidt argument, which applies whenever the endomorphism rings of the indecomposable summands are local. This completes the proof. 
\end{proof}

\section{Main Results}

When \( R \) is a complete Noetherian local ring, the theory of Matlis reflexive modules admits a remarkably elegant and transparent characterization in terms of the class of minimax modules. This section is devoted to proving this characterization and exploring its consequences.

\begin{definition}
\label{def:minimax}
An \( R \)-module \( M \) is called {minimax} if there exists a Noetherian submodule \( N \subseteq M \) such that the quotient module \( M/N \) is Artinian \cite{Belshoff1996a}.
\end{definition}

The class of minimax modules was introduced by Belshoff, Slattery, and Wickham as a natural generalization of both Noetherian and Artinian modules. It is closed under submodules, quotients, and extensions, and it provides a convenient framework for studying modules that are "between" Noetherian and Artinian.

\begin{theorem}[Krause \cite{Krause2025}]
\label{thm:main-characterization}
Let \( (R, \mathfrak{m}) \) be a complete Noetherian local ring. For an \( R \)-module \( M \), the following conditions are equivalent:
\begin{enumerate}
\item \( M \) is Matlis reflexive.
\item \( M \) is minimax.
\item \( M \) has no subquotient that is an infinite direct sum of nonzero modules.
\item \( M \) is linearly compact.
\end{enumerate}
\end{theorem}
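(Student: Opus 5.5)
We prove the cycle $(2)\Rightarrow(1)\Rightarrow(3)\Rightarrow(2)$ and then attach $(4)$ through the equivalence $(2)\Leftrightarrow(4)$. Throughout, $R$ is complete local, so $E=E_R(R/\mathfrak{m})$, and classical Matlis duality applies: every Noetherian module and every Artinian module is Matlis reflexive.

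\emph{Step $(2)\Rightarrow(1)$.} Choose $N\subseteq M$ Noetherian with $M/N$ Artinian. Both $N$ and $M/N$ are reflexive by classical Matlis duality. The extension part of Theorem~\ref{thm:krull-schmidt}(1) then shows that $M$ is reflexive.

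\emph{Step $(1)\Rightarrow(3)$.} A subquotient of $M$ has the form $M_1/M_2$ with $M_2\subseteq M_1\subseteq M$. It is reflexive by the Serre property in Theorem~\ref{thm:krull-schmidt}(1). If $M_1/M_2\cong\bigoplus_{i\in I}X_i$ with every $X_i\neq 0$, then Lemma~\ref{lem:no-infinite-sums} forces $I$ to be finite. Hence no subquotient is an infinite direct sum of nonzero modules.

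\emph{Step $(3)\Rightarrow(2)$.} This is the heart of the proof and the step I expect to be the main obstacle.
\begin{enumerate}
\item Among finitely generated submodules of $M$, choose one, say $N$, that is maximal for a suitable invariant. The natural choice is to maximize $\dim N$ and then the multiplicity, or to apply a Zorn-type argument on chains.
\item Set $\bar M=M/N$. The aim is to show that $\bar M$ is $\mathfrak{m}$-torsion with finitely generated socle.
\item If the socle of $\bar M$ were infinite-dimensional over $R/\mathfrak{m}$, it would be an infinite direct sum of copies of $R/\mathfrak{m}$. Since it is a subquotient of $M$, this contradicts $(3)$.
\item If some element of $\bar M$ were not $\mathfrak{m}$-torsion, I would try to enlarge $N$ and contradict its maximality. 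The precise form of the contradiction depends on the invariant chosen in the first item, and this is where care is needed.
\item Once $\bar M$ is $\mathfrak{m}$-torsion with finite socle, it embeds in $E^n$ for some $n$, and is therefore Artinian.
\end{enumerate}
If the choice of $N$ does not close, the fallback is to prove the contrapositive. Take $M$ not minimax and construct inductively a strictly ascending chain of finitely generated submodules such that no quotient of $M$ by a Noetherian submodule is Artinian. Extracting an infinite independent family in some subquotient from such a chain, using Noetherianity of $R$ and Krull's intersection theorem, would produce an infinite direct sum and contradict $(3)$.

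\emph{Step $(2)\Leftrightarrow(4)$.} Noetherian modules over a complete local ring are linearly compact in the $\mathfrak{m}$-adic sense (Chevalley), and Artinian modules are linearly compact in the discrete topology. Linear compactness is closed under extensions, which gives $(2)\Rightarrow(4)$. For $(4)\Rightarrow(3)$, note that linear compactness passes to closed submodules and quotients. An infinite direct sum of nonzero modules is not linearly compact: the cosets of the submodules $\bigoplus_{i\neq j}X_i$ have the finite intersection property, since any finitely many have nonempty intersection, yet the whole family has empty intersection.
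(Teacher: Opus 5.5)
Your steps $(2)\Rightarrow(1)$ and $(1)\Rightarrow(3)$ are exactly the paper's. The step $(3)\Rightarrow(2)$ is not proved, and it is the one substantial implication; your (4) must also pass through it to reach (1). Item 4 is left open, and the plan cannot close as designed, for three reasons.

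\emph{Zorn does not apply.} A union of a chain of finitely generated submodules need not be finitely generated.

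\emph{The invariant presupposes the key fact.} Maximizing $\dim N$ and then the multiplicity assumes that the lengths $\ell_{R_\mathfrak{p}}(N_\mathfrak{p})$ are bounded, for the finitely many top-dimensional primes $\mathfrak{p}$ of $\operatorname{Supp}M$. That is, it assumes $M_\mathfrak{p}$ has finite length over $R_\mathfrak{p}$.

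\emph{A maximal $N$ does not finish the job.} Adjoining an $x$ whose image in $M/N$ has lower-dimensional support changes neither invariant. So you only get $\dim M/N<\dim M$, and you must iterate by induction on dimension.

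The finiteness of $\ell_{R_\mathfrak{p}}(M_\mathfrak{p})$ is the real content. Your only use of (3), finiteness of the socle of $M/N$ for finitely generated $N$, cannot give it. Take $R=k[[x,y]]$ and $M=R_x/R=\bigoplus_{m\ge1}k[[y]]x^{-m}$.
\begin{enumerate}
\item Every finitely generated $N$ lies in the finitely generated module $0:_Mx^n$ for some $n$, and $M/(0:_Mx^n)\cong M$ is uniform. By subadditivity of Goldie dimension, every such $M/N$ has finite Goldie dimension.
\item Yet $M$ is not minimax. Indeed, $M_{(x)}$ is the injective hull of the residue field of the discrete valuation ring $R_{(x)}$, so it is not finitely generated. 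A minimax module, by contrast, has finitely generated localizations at primes $\mathfrak{p}\neq\mathfrak{m}$.
\item Condition (3) fails for $M$, but only through quotients by non-finitely generated submodules such as $W=\bigoplus_{m\ge1}y^mk[[y]]x^{-m}$. The classes of $y^{m-1}x^{-m}$, $m\ge1$, are linearly independent socle elements of $M/W$.
\end{enumerate}

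So you need an argument that applies (3) to quotients by arbitrary submodules in order to show $\ell_{R_\mathfrak{p}}(M_\mathfrak{p})<\infty$ for the top-dimensional $\mathfrak{p}\neq\mathfrak{m}$. This is in substance Z\"oschinger's theorem that a module all of whose quotients have finite Goldie dimension is minimax. The paper does not prove this either; it invokes Krause's Lemma 5.3. Your fallback names no mechanism for extracting the independent family.

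Your treatment of (4) takes a genuinely different route from the paper's. The paper proves $(1)\Leftrightarrow(4)$ directly, via linear compactness of Matlis duals and Krause's Proposition 6.1. Your $(4)\Rightarrow(3)$ passes to subquotients and uses the cosets $x_j+\bigoplus_{i\neq j}X_i$ with $0\neq x_j\in X_j$. It is correct and elementary, but because it lands in (3), $(4)\Rightarrow(1)$ inherits the same gap.

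In $(2)\Rightarrow(4)$, work with discrete linear compactness throughout. Finitely generated modules over complete $R$ have it because, by Krull's intersection theorem, every submodule is $\mathfrak{m}$-adically closed. Closure under extensions only makes sense for a single notion, so mixing the $\mathfrak{m}$-adic and discrete versions, as written, is not enough.
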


\begin{proof}
We prove the equivalence of these conditions in a systematic manner.

{(1) \(\Rightarrow\) (3):} Assume \( M \) is Matlis reflexive. By the Serre subcategory property established in Theorem \ref{thm:krull-schmidt}, every submodule and quotient of \( M \) is also Matlis reflexive. If \( M \) had a subquotient that is an infinite direct sum of nonzero modules, then that subquotient would be a Matlis reflexive module that is an infinite direct sum, contradicting Lemma \ref{lem:no-infinite-sums}. Therefore, \( M \) has no such subquotient.

{(3) \(\Rightarrow\) (2):} Suppose \( M \) has no subquotient that is an infinite direct sum of nonzero modules. This condition implies that \( M \) has finite support, meaning that \( \operatorname{Supp}(M) \) is a finite set of prime ideals. Since \( R \) is complete and Noetherian, we can view \( M \) as a module over a suitable quotient of \( R \). Applying Lemma 5.3 of \cite{Krause2025}, we obtain a Noetherian submodule \( U \subseteq M \) such that \( M/U \) is Artinian. Thus \( M \) is minimax by Definition \ref{def:minimax}.

{(2) \(\Rightarrow\) (1):} Let \( M \) be minimax, so there exists a Noetherian submodule \( N \subseteq M \) with \( M/N \) Artinian. Over a complete local ring, it is a classical theorem of Matlis \cite{Matlis1958} that every Noetherian module and every Artinian module is Matlis reflexive. Since the class of reflexive modules is closed under extensions (Theorem \ref{thm:krull-schmidt}), the extension \( 0 \to N \to M \to M/N \to 0 \) yields that \( M \) is reflexive.

{(1) \(\Leftrightarrow\) (4):} The equivalence with linear compactness follows from the following observations. First, any Matlis dual module \( M^\vee \) is linearly compact, and hence so is any direct summand, which includes every reflexive module by Lemma \ref{lem:pure-injective}. Conversely, if \( M \) is linearly compact, then the canonical map \( M \to (M^\vee)^\vee \) is an isomorphism \cite[Proposition 6.1]{Krause2025}. This establishes the equivalence.

Thus all four conditions are equivalent. 
\end{proof}

\begin{corollary}
\label{cor:main}
Let \( (R, \mathfrak{m}) \) be a complete local ring. An \( R \)-module \( M \) is Matlis reflexive if and only if there exists a finitely generated submodule \( N \subseteq M \) such that \( M/N \) is Artinian and the quotient ring \( R/\operatorname{Ann}_R(M) \) is complete \cite[Theorem 3.1]{Matlis1958}.
\end{corollary}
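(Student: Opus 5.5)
Since $R$ is itself complete, the extra condition that $R/\operatorname{Ann}_R(M)$ be complete is automatic: every quotient of a complete Noetherian local ring by an ideal is complete in the $\mathfrak{m}$-adic topology. Concretely, if $I = \operatorname{Ann}_R(M)$, then $R/I$ is a finitely generated $R$-module, hence $\mathfrak{m}$-adically complete, and the $\mathfrak{m}$-adic and $\mathfrak{m}/I$-adic topologies on $R/I$ coincide. The plan is therefore to check this fact first. After that, the corollary collapses to the equivalence (1)$\Leftrightarrow$(2) of Theorem~\ref{thm:main-characterization}, together with the observation that over a Noetherian ring a submodule is Noetherian exactly when it is finitely generated.

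For the forward direction, suppose $M$ is Matlis reflexive. By Theorem~\ref{thm:main-characterization}, (1)$\Rightarrow$(2), $M$ is minimax, so there is a Noetherian submodule $N \subseteq M$ with $M/N$ Artinian. A Noetherian module is finitely generated, so $N$ is the required submodule. Completeness of $R/\operatorname{Ann}_R(M)$ then follows from the remark above.

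For the converse, let $N \subseteq M$ be finitely generated with $M/N$ Artinian. Since $R$ is Noetherian, $N$ is Noetherian, so $M$ is minimax by Definition~\ref{def:minimax}. Theorem~\ref{thm:main-characterization}, (2)$\Rightarrow$(1), then shows that $M$ is Matlis reflexive. The completeness hypothesis on $R/\operatorname{Ann}_R(M)$ is not needed in this direction.

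The only real point is interpretive rather than technical. The condition on $R/\operatorname{Ann}_R(M)$ is a vestige of the statement over arbitrary (not necessarily complete) local rings, following \cite{Belshoff1996a}. There one would argue differently: pass to $\bar R = R/\operatorname{Ann}_R(M)$, note that $M$ is reflexive over $R$ if and only if it is reflexive over $\bar R$ (because $\operatorname{Hom}_R(M,E)=\operatorname{Hom}_{\bar R}(M,E_{\bar R})$ with $E_{\bar R}=(0:_E \operatorname{Ann}_R(M))$), and apply Matlis duality over the complete ring $\bar R$. Under the stated hypothesis that $R$ is complete, this detour is unnecessary. I would include a one-line justification that quotients of complete local rings are complete, and otherwise simply cite Theorem~\ref{thm:main-characterization}.
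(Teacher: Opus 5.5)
Your proposal is correct and follows essentially the same route as the paper, which likewise deduces the corollary from the equivalence (1)$\Leftrightarrow$(2) of Theorem~\ref{thm:main-characterization}, together with the fact that a submodule over a Noetherian ring is Noetherian exactly when it is finitely generated. Your explicit remark that $R/\operatorname{Ann}_R(M)$ is automatically complete, being a quotient of the complete local ring $R$, justifies the annihilator condition more cleanly than the paper's vague comment on it.
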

\begin{proof}
This follows immediately from the equivalence (1) \(\Leftrightarrow\) (2) in Theorem \ref{thm:main-characterization}, together with the fact that any Noetherian submodule of a module over a Noetherian ring is finitely generated. The completeness condition on the annihilator quotient is necessary for the module to be defined over a complete local ring, which is the setting in which the minimax characterization holds. 
\end{proof}

\begin{proposition}[Closure properties]
\label{prop:closure}
The class of Matlis reflexive modules over a Noetherian ring is closed under the following operations:
\begin{enumerate}
\item Submodules and quotient modules.
\item Finite direct sums and finite direct products.
\item Extensions.
\end{enumerate}
\end{proposition}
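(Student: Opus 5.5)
The plan is to derive all three closure properties from a single diagram-chase argument, namely the one behind part (1) of Theorem \ref{thm:krull-schmidt}. I would not rely on that theorem as a black box; instead I would re-derive the key step carefully, because this is where the real content lies.

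Start from a short exact sequence $0 \to M' \xrightarrow{\alpha} M \xrightarrow{\beta} M'' \to 0$. Since $E$ is injective, $(-)^\vee$ is exact (Remark after Lemma \ref{lem:adjunction}). Hence the double-dual sequence $0 \to M'^{\vee\vee} \to M^{\vee\vee} \to M''^{\vee\vee} \to 0$ is exact. Naturality of $\delta$ gives a commutative ladder. By the Remark, each of $\delta_{M'}$, $\delta_M$, $\delta_{M''}$ is injective, so the snake lemma yields an exact sequence
\[
0 \to \operatorname{coker}\delta_{M'} \to \operatorname{coker}\delta_M \to \operatorname{coker}\delta_{M''} \to 0.
\]
The leftmost map is injective precisely because $\ker\delta_{M''}=0$. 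Consequently $\operatorname{coker}\delta_M=0$ if and only if both outer cokernels vanish. This gives item (1), applied to $M'\subseteq M$ and $M'' = M/M'$, and item (3), extensions, at once.

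For item (2), closure under finite direct sums follows from (3) by induction on the number of summands, using the split sequence $0\to M_1\to M_1\oplus M_2\to M_2\to 0$. Finite direct products coincide with finite direct sums in the category of $R$-modules, so nothing more is needed. Alternatively, one can verify directly that $\delta_{M_1\oplus M_2}=\delta_{M_1}\oplus\delta_{M_2}$ under the identification $(M_1\oplus M_2)^{\vee\vee}\cong M_1^{\vee\vee}\oplus M_2^{\vee\vee}$, which comes from additivity of $\operatorname{Hom}_R(-,E)$ in finitely many summands.

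The main point needing care is the snake-lemma step. One must check explicitly that the bottom row is exact and that the ladder commutes, i.e.\ $\alpha^{\vee\vee}\circ\delta_{M'}=\delta_M\circ\alpha$; this is immediate from the formula $\delta(x)(f)=f(x)$. One must also use injectivity of all three $\delta$'s, not just the outer ones. Without the vanishing of $\ker\delta_{M''}$, the snake sequence would only give a four-term exact sequence, and the converse direction (reflexivity of $M$ implying that of $M'$) would fail to follow. I also note that the argument is independent of the Krull--Schmidt part and of completeness, so it holds over any Noetherian ring, as claimed.
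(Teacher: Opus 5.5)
Your proposal is correct and follows essentially the same route as the paper. The paper deduces all three items from the Serre-subcategory part of Theorem~\ref{thm:krull-schmidt}, which is proved there by exactly this snake-lemma argument on the ladder of evaluation maps, and it identifies finite products with finite direct sums; your explicit use of the injectivity of $\delta_{M''}$ to get the short exact sequence of cokernels spells out a step that the paper's ``by the Snake Lemma'' leaves implicit.
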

\begin{proof}
These closure properties follow directly from Theorem \ref{thm:krull-schmidt}, which establishes that the reflexive modules form a Serre subcategory closed under finite direct sums. Finite products are equivalent to finite direct sums in the category of modules, so closure under products follows as well. 
\end{proof}

To develop an intuitive understanding of the class of Matlis reflexive modules, it is essential to examine both examples that satisfy reflexivity and pathologies that fail to be reflexive. We present a selection of such examples below.

\begin{example}[Modules of finite length]
Every module of finite length is Matlis reflexive. Indeed, finite length modules are both Noetherian and Artinian, and over a complete local ring, both classes are reflexive by the classical theorem of Matlis \cite{Matlis1958}. This includes, for instance, modules of the form \( R/\mathfrak{m}^n \) over a complete local ring \( (R, \mathfrak{m}) \).
\end{example}

\begin{example}[Finitely generated modules]
If \( (R, \mathfrak{m}) \) is a complete local ring and \( M \) is finitely generated, then \( M \) is Matlis reflexive. The dual \( M^\vee \) is Artinian, and the double dual \( (M^\vee)^\vee \) is canonically isomorphic to \( M \) \cite{Matlis1958}. This isomorphism is precisely the evaluation map \( \delta_M \).
\end{example}

\begin{example}[Artinian modules]
Similarly, Artinian modules over complete local rings are Matlis reflexive. This is the dual version of the previous example: if \( M \) is Artinian, then \( M^\vee \) is Noetherian, and \( (M^\vee)^\vee \cong M \).
\end{example}

\begin{example}[Minimax modules]
Over a complete local ring, any minimax module is Matlis reflexive by Theorem \ref{thm:main-characterization}. For example, consider \( R = \mathbb{Z}_p \) (the ring of \( p \)-adic integers). The module
\[
M = \mathbb{Z}_p \oplus \bigoplus_{n=1}^\infty \mathbb{Z}/p^n\mathbb{Z}
\]
is minimax (it contains the Noetherian submodule \( \mathbb{Z}_p \) with Artinian quotient \( \bigoplus_{n=1}^\infty \mathbb{Z}/p^n\mathbb{Z} \)), and hence it is Matlis reflexive.
\end{example}

\begin{example}[Non-example: infinite direct sums]
Let \( (R, \mathfrak{m}) \) be a complete local ring. The infinite direct sum
\[
M = \bigoplus_{n=1}^\infty R/\mathfrak{m}^n
\]
is \emph{not} Matlis reflexive. This follows immediately from Lemma \ref{lem:no-infinite-sums}, which prohibits infinite direct sums of nonzero modules from being reflexive. Intuitively, the direct sum is "too large" at infinity to be captured by its Matlis dual.
\end{example}

\begin{example}[Non-example: infinite products over non-complete rings]
Over a non-complete ring, even infinite products may fail to be reflexive. For instance, consider \( R = \mathbb{Z} \), the ring of integers. The product
\[
M = \prod_{p \text{ prime}} \mathbb{Z}/p\mathbb{Z}
\]
is not Matlis reflexive. Its Matlis dual is the direct sum \( \bigoplus_{p} \operatorname{Hom}(\mathbb{Z}/p\mathbb{Z}, \mathbb{Q}/\mathbb{Z}) \), which is not isomorphic to the original product. This failure is rooted in the fact that \( \mathbb{Z} \) is not complete, and the minimal injective cogenerator \( E = \mathbb{Q}/\mathbb{Z} \) does not have the same duality properties as in the local complete case.
\end{example}

The theory of Matlis reflexive modules has found profound applications in the study of local cohomology, particularly in establishing finiteness properties and cofiniteness of generalized local cohomology modules. In this section, we survey some of the most significant results in this direction.

Local cohomology, first introduced by Grothendieck, provides a powerful framework for studying the depth and singularities of Noetherian rings. The \( j \)-th local cohomology module of a module \( M \) with support in an ideal \( I \) is defined as the right-derived functor
\[
H_I^j(M) = \varinjlim_{n} \operatorname{Ext}_R^j(R/I^n, M).
\]
A fundamental problem in the theory is to determine when these modules are finitely generated or, more generally, cofinite.

\begin{theorem}[Belshoff--Slattery--Wickham \cite{Belshoff1996b}]
\label{thm:belshoff}
Let \( (R, \mathfrak{m}) \) be a complete local ring, and let \( M \) be a Matlis reflexive \( R \)-module. Then for any ideal \( I \subseteq R \), the local cohomology module \( H_I^j(M) \) is Matlis reflexive for all \( j \) such that \( \dim R/I \leq 2 \).
\end{theorem}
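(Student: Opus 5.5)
The natural strategy is to reduce to the two classical cases using Theorem \ref{thm:main-characterization}. Since $R$ is complete, $M$ is minimax, so there is a short exact sequence $0 \to N \to M \to A \to 0$ with $N$ finitely generated and $A$ Artinian. Applying $\Gamma_I$ gives a long exact sequence
\[
\cdots \to H_I^j(N) \to H_I^j(M) \to H_I^j(A) \to H_I^{j+1}(N) \to \cdots .
\]
By the Serre property of Theorem \ref{thm:krull-schmidt}, it would suffice to show that $H_I^j(N)$ and $H_I^j(A)$ are reflexive for all $j$. This is because $H_I^j(M)$ is an extension of a submodule of $H_I^j(A)$ by a quotient of $H_I^j(N)$.

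The Artinian part is easy. An Artinian module is $\mathfrak{m}$-torsion and hence $I$-torsion. Every injective in its minimal injective resolution is a direct sum of copies of $E$, which is also $I$-torsion. Therefore $H_I^j(A)=0$ for $j>0$ and $H_I^0(A)=A$, which is reflexive. Likewise, for any reflexive $M$, the module $H_I^0(M)=\Gamma_I(M)$ is a submodule of $M$ and hence reflexive by Proposition \ref{prop:closure}. So the whole statement reduces to showing that $H_I^j(N)$ is minimax for finitely generated $N$, $j\ge 1$ and $\dim R/I\le 2$.

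That reduction is where the plan breaks down, and I expect the statement as written to be false there. Take $R=k[[x,y,z,w]]$, $I=(x,y)$ and $N=R$, so that $\dim R/I=2$. Suppose $H_I^2(R)$ were minimax, say with finitely generated $U$ and Artinian quotient. Localize at $\mathfrak{p}=I$. The Artinian quotient is supported only at $\mathfrak{m}$, so it dies, and $H_{IR_\mathfrak{p}}^2(R_\mathfrak{p})=H^2_{\mathfrak{p}R_\mathfrak{p}}(R_\mathfrak{p})$ would be finitely generated. But this is the top local cohomology of a $2$-dimensional regular local ring, which is nonzero and not finitely generated. Hence $H_I^2(R)$ is not minimax, and therefore not reflexive.

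So the argument can only establish the weaker statements: $H_I^0(M)$ is reflexive, and $H_I^j(M)$ is reflexive whenever $M$ is Artinian. For finitely generated $N$, what one can realistically aim for is the cofiniteness-type conclusion that $\operatorname{Ext}_R^i(R/I,H_I^j(N))$ is finitely generated, hence reflexive, when $\dim R/I\le 2$. This would go through the known cofiniteness theorems in low dimension rather than through reflexivity of $H_I^j(N)$ itself. I would therefore propose correcting the statement to this form before attempting a proof.
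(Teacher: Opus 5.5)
Your refutation is correct, and it is the paper's argument, not yours, that breaks down. Each step of your counterexample checks out:
\begin{itemize}
\item $R=k[[x,y,z,w]]$ is reflexive over itself because it is complete, and $\dim R/(x,y)=2$.
\item An Artinian $R$-module is $\mathfrak{m}$-torsion, so it vanishes after localizing at $\mathfrak{p}=(x,y)\neq\mathfrak{m}$.
\item Local cohomology commutes with localization, and $H^2_{\mathfrak{p}R_{\mathfrak{p}}}(R_{\mathfrak{p}})\cong E_{R_{\mathfrak{p}}}(k(\mathfrak{p}))$ is not finitely generated.
\end{itemize}
Combined with (1)$\Rightarrow$(2) of Theorem~\ref{thm:main-characterization}, which is a correct theorem over complete local rings, this shows $H^2_I(R)$ is not reflexive. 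The paper's proof consists only of the claim that local cohomology of minimax modules is minimax ``under suitable dimension conditions,'' plus a citation. Your example refutes that claim directly.

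The bound $2$ is not really the issue. The module $H^1_{(x)}(k[[x,y]])=k[[x,y]]_x/k[[x,y]]$ fails in exactly the same way with $\dim R/I=1$. On the other hand, your reduction shows the statement does hold when $I$ is $\mathfrak{m}$-primary, since then $H^j_I(N)=H^j_{\mathfrak{m}}(N)$ is Artinian. What is actually true in this circle of results are ``almost cofiniteness'' statements about the modules $\operatorname{Ext}^i_R(R/I,H^j_I(M))$, not about $H^j_I(M)$ itself.

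Where your proposal does go wrong is the suggested repair. For $\dim R/I=2$, the modules $\operatorname{Ext}^i_R(R/I,H^j_I(N))$ need not be finitely generated, or even reflexive. Take Hartshorne's example: $R=k[[x,y,u,v]]$, $N=R/(xu-yv)$, $I=(x,y)$. Here $\dim R/I=2$, and $H^2_I(N)$ is $\mathfrak{m}$-torsion with infinite-dimensional socle. Hence $\operatorname{Hom}_R(R/I,H^2_I(N))$ is an $\mathfrak{m}$-torsion module with infinite-dimensional socle, so it is neither finitely generated nor minimax. The cofiniteness theorems you want to invoke hold for $\dim R/I\le 1$ (Delfino--Marley, Yoshida) and under hypotheses such as $\dim R\le 2$, but not for $\dim R/I=2$.

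Under the hypothesis $\dim R/I\le 1$, your reduction does extend to an arbitrary reflexive $M$. From $0\to N\to M\to A\to 0$ you get:
\begin{itemize}
\item $H^j_I(M)\cong H^j_I(N)$ for $j\ge 2$;
\item $H^1_I(M)\cong H^1_I(N)/A'$ with $A'$ Artinian;
\item $\Gamma_I(M)$ is minimax.
\end{itemize}
Since $\operatorname{Ext}^i_R(R/I,-)$ sends finitely generated modules to finitely generated ones and Artinian modules to Artinian ones, the Serre property then makes every $\operatorname{Ext}^i_R(R/I,H^j_I(M))$ Matlis reflexive.
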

\begin{proof}
The proof relies on the minimax characterization of Theorem \ref{thm:main-characterization} and the fact that local cohomology modules of minimax modules are minimax under suitable dimension conditions. The details are given in \cite[Theorem 3.2]{Belshoff1996b}. 
\end{proof}

Generalized local cohomology, introduced by Herzog, provides a simultaneous generalization of both local cohomology and Ext modules. For modules \( M \) and \( N \), the generalized local cohomology module is defined as
\[
H_I^j(M, N) = \varinjlim_{n} \operatorname{Ext}_R^j(M/I^n M, N).
\]
When \( M = R \), this reduces to the usual local cohomology \( H_I^j(N) \).

\begin{theorem}[Mafi \cite{Mafi2009}]
\label{thm:mafi}
Let \( (R, \mathfrak{m}) \) be a complete local ring, \( I \) an ideal of \( R \), and \( M, N \) Matlis reflexive \( R \)-modules with \( \operatorname{Supp}(M) \subseteq V(I) \). If \( N \) is finitely generated, then \( \operatorname{Ext}_R^i(M, H_I^j(N, M)) \) is Matlis reflexive for all \( i, j \) in the following cases:
\begin{enumerate}
\item \( \dim R/I = 1 \);
\item \( \operatorname{cd}(I) = 1 \);
\item \( \dim R \leq 2 \).
\end{enumerate}
\end{theorem}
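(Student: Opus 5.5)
The plan is to reduce everything to the minimax characterization of Theorem \ref{thm:main-characterization} together with the Serre-subcategory property of Theorem \ref{thm:krull-schmidt}. Since \( R \) is complete local, \( M \) and \( N \) are minimax, and it suffices to show that \( \operatorname{Ext}_R^i(M, H_I^j(N,M)) \) is minimax for all \( i,j \) in each of the three cases. The first step is to identify \( H_I^j(N,M) \) in a form amenable to known cofiniteness results. Because \( N \) is finitely generated, I would use the standard Grothendieck-type spectral sequence
\[
E_2^{p,q}=\operatorname{Ext}_R^p(N, H_I^q(M)) \Longrightarrow H_I^{p+q}(N,M).
\]
This shows that \( H_I^j(N,M) \) has a finite filtration whose factors are subquotients of the modules \( \operatorname{Ext}_R^p(N,H_I^q(M)) \). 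By the Serre property, it then suffices to control \( \operatorname{Ext}_R^p(N,H_I^q(M)) \). In the second variable, \( M \) is an extension of an Artinian module by a finitely generated one, so the long exact sequence for \( H_I^\ast \) lets me treat these two pieces separately.

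The key intermediate goal is that, in each of the cases (1)--(3), the modules \( H_I^q(M) \) are \( I \)-cofinite minimax, or at least that \( \operatorname{Ext}_R^p(R/I, H_I^q(M)) \) is minimax for all \( p,q \).
\begin{itemize}
\item For the Artinian part of \( M \), the local cohomology is concentrated in degree \( 0 \) and equals the \( I \)-torsion, which is Artinian. Hence it is minimax.
\item For the finitely generated part, I would invoke the known cofiniteness theorems: Delfino--Marley and Yoshida for \( \dim R/I=1 \), Melkersson for \( \operatorname{cd}(I)=1 \), and the \( \dim R\le 2 \) case. These give \( I \)-cofiniteness of \( H_I^q \).
\end{itemize}
Combined with Melkersson's criterion that a minimax module supported in \( V(I) \) with \( \operatorname{Hom}(R/I,-) \) minimax has all \( \operatorname{Ext}^p(R/I,-) \) minimax, this gives the needed control. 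The spectral sequence, whose differentials and extension problems stay inside the Serre subcategory, then shows that \( \operatorname{Ext}^p_R(L, H_I^j(N,M)) \) is minimax for every finitely generated \( L \) with support in \( V(I) \).

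The final step passes from finitely generated test modules to \( M \) itself in the first variable. Since \( \operatorname{Supp}(M)\subseteq V(I) \) and \( M \) is minimax, there is an exact sequence \( 0\to U\to M\to A\to 0 \) with \( U \) finitely generated and \( A \) Artinian, both supported in \( V(I) \). The long exact Ext sequence reduces the problem to showing that \( \operatorname{Ext}^i_R(A, X) \) is minimax for \( X=H_I^j(N,M) \). I expect this to be the main obstacle, because Ext out of an Artinian module is not governed by cofiniteness with respect to \( R/I \). I would first try Matlis duality: \( A=B^\vee \) with \( B \) finitely generated, and \( X \) is minimax hence reflexive, so
\[
\operatorname{Ext}^i_R(A,X)\cong \operatorname{Ext}^i_R(X^\vee, B^{\vee\vee})\cong \operatorname{Ext}_R^i(X^\vee,B),
\]
by adjunction (Lemma \ref{lem:adjunction}) and exactness of \( (-)^\vee \). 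Here \( X^\vee \) is again minimax, with dual cofiniteness properties. This reduces the question to Ext between minimax modules with one argument finitely generated, which can be handled by devissage on the minimax filtration of \( X^\vee \), using \( \mathfrak m \)-cofiniteness for its Artinian part.
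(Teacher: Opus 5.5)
Your overall frame (reduce to minimax, then use the Serre property) matches the paper's, which simply defers to Mafi. The problem is that the step you flag as the main obstacle is never actually resolved. Your identity $\operatorname{Ext}^i_R(A,X)\cong\operatorname{Ext}^i_R(X^\vee,B)$ is correct, but it is circular. Write $0\to U_X\to X\to A_X\to 0$ with $U_X$ finitely generated and $A_X$ Artinian. Then $\operatorname{Ext}^i_R(A,A_X)\cong\operatorname{Ext}^i_R(A_X^\vee,A^\vee)$ is finitely generated, so the only nontrivial piece of $\operatorname{Ext}^i_R(A,X)$ is $\operatorname{Ext}^i_R(A,U_X)$. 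Now $X^\vee$ has finitely generated submodule $A_X^\vee$ and Artinian quotient $U_X^\vee$. Your devissage of $X^\vee$ therefore leaves you with $\operatorname{Ext}^i_R(U_X^\vee,B)\cong\operatorname{Ext}^i_R(A,U_X)$, which is exactly the term you started from. Duality maps the case ``Artinian source, finitely generated target'' to itself. Appealing to $\mathfrak m$-cofiniteness of the Artinian source does not help either, since that controls $\operatorname{Ext}$ \emph{into} an Artinian module, not out of it.

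The missing idea is finiteness of the Bass numbers of the \emph{target}. Let $J^\bullet$ be a minimal injective resolution of $X$. Since $A$ is $\mathfrak m$-torsion, $\operatorname{Hom}_R(A,J^k)=\operatorname{Hom}_R(A,\Gamma_{\mathfrak m}(J^k))\cong (A^\vee)^{\mu_k}$, where $\mu_k=\dim_{R/\mathfrak m}\operatorname{Ext}^k_R(R/\mathfrak m,X)$. This number is finite because $\operatorname{Ext}^k_R(R/\mathfrak m,X)$ is minimax and annihilated by $\mathfrak m$. Thus $\operatorname{Ext}^i_R(A,X)$ is a subquotient of the finitely generated module $(A^\vee)^{\mu_i}$, hence finitely generated. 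The long exact sequence for $0\to U\to M\to A\to 0$ then finishes the proof.

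The middle of your plan is also misdirected. Since $\operatorname{Supp}(M)\subseteq V(I)$, the module $M$ is $I$-torsion, so $H^q_I(M)=0$ for $q>0$ and $H^0_I(M)=M$. Your spectral sequence therefore collapses to $H^j_I(N,M)\cong\operatorname{Ext}^j_R(N,M)$, which is minimax by a finite free resolution of $N$. The cofiniteness theorems of Delfino--Marley, Yoshida and Melkersson are never needed; in fact hypotheses (1)--(3) play no role in the statement as given. Your stated intermediate goal is moreover false. The module $M=E$ satisfies all the hypotheses, yet for $\dim R/I=1$ the module $H^0_I(E)=E$ is not $I$-cofinite, because $\operatorname{Hom}_R(R/I,E)\cong(R/I)^\vee$ is not finitely generated. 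What your argument actually uses is only minimaxness of the $E_2$-terms of the spectral sequence, and that is automatic here.
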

\begin{proof}
The result is proved by reducing to the case of minimax modules and applying the fact that Ext and local cohomology preserve minimaxness under the stated dimension hypotheses. The details are given in \cite[Theorems 2.5, 2.7]{Mafi2009}. 
\end{proof}

\begin{theorem}[Khashyarmanesh--Khosh-Ahang \cite{Khashyarmanesh2008}]
\label{thm:khashyarmanesh}
Let \( (R, \mathfrak{m}) \) be a complete local ring, \( I \) an ideal, and \( M, N \) Matlis reflexive \( R \)-modules with \( \operatorname{Supp}(N) \subseteq V(I) \). Then \( H_I^j(M, N) \) is Matlis reflexive for all \( j \) when:
\begin{enumerate}
\item \( \dim(R/I) = 1 \);
\item \( I \) is principal;
\item \( \dim R \leq 2 \).
\end{enumerate}
\end{theorem}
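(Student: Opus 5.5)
The plan is to reduce everything to the minimax characterization of Theorem \ref{thm:main-characterization} and then track minimaxness through the spectral/limit description of generalized local cohomology. Since \( R \) is complete local and \( M, N \) are Matlis reflexive, both are minimax, so the task becomes: show \( H_I^j(M,N) \) is minimax in each of the three cases. First I reduce in \( M \). Choose a finitely generated \( U \subseteq M \) with \( M/U \) Artinian. The short exact sequence \( 0 \to U \to M \to M/U \to 0 \) gives a long exact sequence in \( H_I^\bullet(-,N) \), since each \( \operatorname{Ext}_R^j(-,N) \) is exact in the appropriate sense before passing to the direct limit, and direct limits are exact. By the Serre property of Theorem \ref{thm:krull-schmidt}(1) it then suffices to treat \( M \) finitely generated and \( M \) Artinian separately.

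When \( M \) is Artinian, \( \operatorname{Supp}(M) \subseteq \{\mathfrak{m}\} \) and \( M \) is \( I \)-torsion. Here I argue that \( H_I^j(M,N) \cong \operatorname{Ext}_R^j(M,N) \) because \( N \) is \( I \)-torsion (\( \operatorname{Supp} N \subseteq V(I) \)), so that \( \Gamma_I(\operatorname{Hom}(M,-)) = \operatorname{Hom}(M,-) \) on an injective resolution of \( N \) consisting of \( I \)-torsion injectives. The injective resolution of an \( I \)-torsion module can be taken \( I \)-torsion. Then \( \operatorname{Ext}^j_R(M,N) \) with \( M \) Artinian and \( N \) minimax is handled by Matlis duality: \( \operatorname{Ext}^j_R(M,N)\) dualizes to \( \operatorname{Tor}_j(M,N^\vee) \), up to the reflexivity of the modules involved, and \( \operatorname{Tor} \) of a minimax module with an Artinian module is Artinian, since its support lies in \( \{\mathfrak m\} \) and it is minimax. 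Actually no dimension hypothesis is needed in this case.

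When \( M \) is finitely generated, the standard Grothendieck-type spectral sequence applies:
\[
E_2^{p,q}=\operatorname{Ext}^p_R(M,H_I^q(N)) \Rightarrow H_I^{p+q}(M,N).
\]
Since \( \operatorname{Supp}(N)\subseteq V(I) \), we have \( H_I^q(N)=0 \) for \( q>0 \) and \( H_I^0(N)=N \). The spectral sequence collapses to \( H_I^j(M,N)\cong\operatorname{Ext}^j_R(M,N) \). This is minimax because \( M \) is finitely generated: compute it with a resolution of \( M \) by finite free modules, so each term is a finite sum of copies of \( N \), and use the Serre property. Again the dimension hypotheses in (1)--(3) appear not to be needed under the support assumption.

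The main obstacle is making the identification \( H_I^j(M,N)\cong \operatorname{Ext}^j_R(M,N) \) rigorous for non-finitely-generated \( M \). This requires checking that the paper's limit definition \( \varinjlim_n \operatorname{Ext}^j(M/I^nM,N) \) agrees with the derived-functor definition of \( \Gamma_I\operatorname{Hom}(M,-) \). That agreement is only standard for finitely generated \( M \). If it fails, I would instead use only the minimaxness of the individual terms \( \operatorname{Ext}^j(M/I^nM,N) \) and control the direct limit. This is where the hypotheses \( \dim R/I=1 \), \( I \) principal, or \( \dim R\le 2 \) would enter, via cofiniteness results bounding the limit, as in \cite{Khashyarmanesh2008}.
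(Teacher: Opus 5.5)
\textbf{There is a genuine gap: your reduction step fails.} With $H^j_I(M,N)=\varinjlim_n\operatorname{Ext}^j_R(M/I^nM,N)$, the sequence $0\to U\to M\to M/U\to 0$ does not give exact sequences $0\to U/I^nU\to M/I^nM\to (M/U)/I^n(M/U)\to 0$. The left-hand term is really $U/(U\cap I^nM)$. Because $M$ is not finitely generated, there is no Artin--Rees lemma making $\{U\cap I^nM\}$ cofinal with $\{I^nU\}$. In derived-functor terms, $\Gamma_I\operatorname{Hom}_R(-,J)$ is not exact in the first variable. So there is no long exact sequence in $M$, and splitting into a finitely generated case and an Artinian case is unjustified.

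A concrete failure: take $R=\mathbb{Z}_p$, $I=(p)$, $M=\mathbb{Q}_p$ (minimax, hence reflexive by Theorem~\ref{thm:main-characterization}), $U=\mathbb{Z}_p$, $N=\mathbb{Z}/p$. Since $p^nM=M$ and $p^n(M/U)=M/U$, all $H^j_I(M,N)$ and $H^j_I(M/U,N)$ vanish, while $H^0_I(U,N)\cong\mathbb{Z}/p$.

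Your Artinian case is also wrong as argued. For an $I$-torsion injective $J$ one does not have $\Gamma_I\operatorname{Hom}_R(M,J)=\operatorname{Hom}_R(M,J)$ when $M$ is not finitely generated; for example $\operatorname{Hom}_R(E,E)\cong R$ is not $I$-torsion unless $I$ is nilpotent. In fact, for Artinian $M$ one gets $H^j_I(M,N)=\operatorname{Ext}^j_R(M/I^{n_0}M,N)$, where $I^nM=I^{n_0}M$ for $n\ge n_0$. This is in general not $\operatorname{Ext}^j_R(M,N)$: over $\mathbb{Z}_p$ with $M=\mathbb{Q}_p/\mathbb{Z}_p$ and $N=\mathbb{Z}/p$, $H^1_I(M,N)=0$ but $\operatorname{Ext}^1_R(M,N)\cong\mathbb{Z}/p$. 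The step ``$\operatorname{Tor}$ is Artinian because it is minimax'' is circular.

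The obstacle you name at the end is also misplaced. The limit definition agrees with $R^j(\Gamma_I\operatorname{Hom}_R(M,-))(N)$ for every $M$: both are effaceable $\delta$-functors in $N$ with degree-zero term $\Gamma_I\operatorname{Hom}_R(M,N)$. What needs finite generation is the identification with $\operatorname{Ext}$ and exactness in $M$. Finally, ``controlling the direct limit'' with (1)--(3) is not an argument, since direct limits of minimax modules need not be minimax.

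\textbf{How to repair it.} The paper itself offers only a pointer (an induction on $\dim R/I$ via exact sequences and closure properties, deferred to the source), so there is nothing to import from it. Your observation that (1)--(3) play no role once $\operatorname{Supp}(N)\subseteq V(I)$ is correct, and your direct route works if you split $M$ differently.
\begin{enumerate}
\item Choose $n_0$ with $I^n(M/U)$ constant for $n\ge n_0$, and put $D=U+I^{n_0}M$. Then $D=ID+U$, so each $D/I^nD$ is a quotient of $U$.
\item Since $I^{n+n_0}M\subseteq I^nD\subseteq I^nM$, the two inverse systems are interleaved, so $H^j_I(M,N)\cong\varinjlim_n\operatorname{Ext}^j_R(M/I^nD,N)$.
\item The compatible sequences $0\to D/I^nD\to M/I^nD\to M/D\to 0$ have a constant Artinian right-hand term, so they do yield a long exact sequence in the limit.
\item For $I\neq R$, the ring $R$ is $I$-adically complete, so $F=\varprojlim_n D/I^nD$ is finitely generated with $F/I^nF\cong D/I^nD$. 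Hence $\varinjlim_n\operatorname{Ext}^j_R(D/I^nD,N)\cong\operatorname{Ext}^j_R(F,N)$ by your finitely generated case, which is correct.
\item It remains to show that $\operatorname{Ext}^j_R(A,N)$ is finitely generated for $A$ Artinian and $N$ minimax. This follows from $\operatorname{Ext}^j_R(A,B^\vee)\cong\operatorname{Tor}^R_j(A,B)^\vee$, plus the nontrivial fact that $\operatorname{Tor}$ of two Artinian modules over a complete local ring is Artinian.
\end{enumerate}
The Serre property then makes $H^j_I(M,N)$ minimax, hence reflexive, with no dimension hypothesis.
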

\begin{proof}
This is a strengthening of Mafi's result, and the proof proceeds by induction on the dimension of \( R/I \), using the exact sequences associated to local cohomology and the closure properties of reflexive modules \cite[Theorem 2.4]{Khashyarmanesh2008}.
\end{proof}

These theorems demonstrate that Matlis reflexivity is a powerful tool for establishing finiteness properties of local cohomology modules, including the finiteness of Bass numbers and the cofiniteness of generalized local cohomology modules.

A natural and important question in the theory of Matlis reflexive modules concerns the behavior of reflexivity under base change, localization, and completion. Recent work of Dailey and Marley \cite{Dailey2017} has provided significant insights into this problem.

\begin{theorem}[Dailey--Marley \cite{Dailey2017}]
\label{thm:change-of-rings}
Let \( S \) be a multiplicatively closed subset of \( R \). If \( M \) is an \( R_S \)-module, then the following are equivalent:
\begin{enumerate}
\item \( M \) is Matlis reflexive as an \( R \)-module.
\item \( M \) is Matlis reflexive as an \( R_S \)-module.
\end{enumerate}
Moreover, in the special case where \( S = R \setminus (\mathfrak{p}_1 \cup \cdots \cup \mathfrak{p}_r) \) with each \( \mathfrak{p}_i \) either a maximal ideal or a nonminimal prime ideal, the converse also holds: if \( M \) is \( R_S \)-reflexive, then it is \( R \)-reflexive.
\end{theorem}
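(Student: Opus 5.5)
The plan is to compare the two biduality maps directly. The key observation is that for an \( R_S \)-module \( M \) we have \( \operatorname{Hom}_R(M,X)=\operatorname{Hom}_{R_S}(M,X) \) whenever \( X \) is an \( R_S \)-module, since \( R\to R_S \) is an epimorphism of rings. So the whole question reduces to comparing two injective modules: the \( R \)-cogenerator \( E_R=\bigoplus_{\mathfrak m}E_R(R/\mathfrak m) \) and the \( R_S \)-cogenerator \( E_{R_S} \).

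\textbf{Step 1: remove the summands of \( E_R \) that cannot see \( M \).} Let \( \mathfrak m \) be a maximal ideal with \( \mathfrak m\cap S\neq\emptyset \), pick \( s\in\mathfrak m\cap S \), and let \( f:M\to E_R(R/\mathfrak m) \). Every element of \( E_R(R/\mathfrak m) \) is killed by a power of \( \mathfrak m \), while \( s \) acts bijectively on \( M \).
\begin{itemize}
\item Take \( x=f(y) \) with \( \mathfrak m^k x=0 \).
\item Write \( y=s^k y' \) and set \( x'=f(y') \).
\item Then \( x'\in\operatorname{im} f \) is also killed by some \( \mathfrak m^l \).
\end{itemize}
Iterating this shows \( x\in\bigcap_n \mathfrak m^n\,\operatorname{im}f \), and since the submodule generated by \( x' \) has finite length, we get \( x=0 \). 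Hence \( M^{\vee_R}=\operatorname{Hom}_{R_S}(M,E') \), where
\[
E'=\bigoplus_{\mathfrak m\cap S=\emptyset}E_R(R/\mathfrak m).
\]
This \( E' \) is an injective \( R_S \)-module and a direct summand of \( E_{R_S} \). The reason is that the maximal ideals of \( R_S \) are the \( \mathfrak qR_S \) with \( \mathfrak q \) maximal among primes missing \( S \), and \( E_{R_S}(R_S/\mathfrak qR_S)=E_R(R/\mathfrak q) \). Write \( E_{R_S}=E'\oplus E'' \), where \( E'' \) collects the summands \( E_R(R/\mathfrak q) \) with \( \mathfrak q \) not maximal in \( R \).

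\textbf{Step 2: compare the biduals.} The double \( R \)-dual of \( M \) is \( \operatorname{Hom}_{R}(\operatorname{Hom}_{R_S}(M,E'),E_R) \). Its argument \( \operatorname{Hom}_{R_S}(M,E') \) is itself an \( R_S \)-module, so Step 1 applies again and this equals \( \operatorname{Hom}_{R_S}(\operatorname{Hom}_{R_S}(M,E'),E') \). Thus \( R \)-reflexivity of \( M \) means reflexivity with respect to the injective \( R_S \)-module \( E' \), while \( R_S \)-reflexivity means reflexivity with respect to \( E'\oplus E'' \). The bidual for \( E'\oplus E'' \) splits into four pieces:
\[
\operatorname{Hom}(\operatorname{Hom}(M,E'),E'),\quad \operatorname{Hom}(\operatorname{Hom}(M,E'),E''),\quad \operatorname{Hom}(\operatorname{Hom}(M,E''),E'),\quad \operatorname{Hom}(\operatorname{Hom}(M,E''),E'').
\]
I would use the Serre-subcategory argument from Theorem \ref{thm:krull-schmidt} (dual exactness and the snake lemma) to pass to subquotients. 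Then I would show, via Lemma \ref{lem:no-infinite-sums}, that reflexivity forces \( M \) to have finite support of a controlled shape. Under that shape, either \( \operatorname{Hom}(M,E'')=0 \) or the mixed terms vanish, so the two evaluation maps agree.

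\textbf{Step 3: the special \( S \).} Take \( S=R\setminus(\mathfrak p_1\cup\cdots\cup\mathfrak p_r) \). Then \( R_S \) is semilocal with maximal ideals \( \mathfrak p_iR_S \), so \( E'' \) is the finite sum of those \( E_R(R/\mathfrak p_i) \) with \( \mathfrak p_i \) nonmaximal.
\begin{itemize}
\item When \( \mathfrak p_i \) is nonminimal, pick \( \mathfrak m\supsetneq\mathfrak p_i \) maximal. Then \( E_R(R/\mathfrak p_i) \) should embed into, or be cogenerated by, a product of copies of \( E_R(R/\mathfrak m) \) compatibly with \( S \)-divisibility.
\item From this, \( E' \)-reflexivity should imply \( E'' \)-reflexivity.
\end{itemize}
This transfer is the step I expect to be the main obstacle, and the nonminimality hypothesis must enter exactly here. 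My first attempt would be to reduce to \( R_{\mathfrak p_i} \) and use that \( \mathfrak p_i \) nonminimal makes \( \mathfrak p_iR_{\mathfrak p_i} \) a non-open point of a positive-dimensional local ring. That should rule out reflexive modules with nonzero \( E''\)-dual, by the dimension argument of Belshoff--Slattery--Wickham. The general direction, for arbitrary \( S \), should come out of Steps 1 and 2 alone.
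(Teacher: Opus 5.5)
\textbf{There is a genuine gap: the vanishing claim in Step 1 is false, and Steps 2 and 3 depend on it.} For an \(R_S\)-module \(M\) and a maximal ideal \(\mathfrak m\) meeting \(S\), \(\operatorname{Hom}_R(M,E_R(R/\mathfrak m))\) need not vanish. This already fails in the situation the theorem is about. Take \(R=k[[u,v]]\), \(\mathfrak p=(u)\), \(S=R\setminus\mathfrak p\) and \(M=\kappa(\mathfrak p)\cong k((v))\). The maximal ideal contains \(v\in S\), so your \(E'\) is \(0\) and Step 1 predicts \(M^{\vee_R}=0\). But \(M\) is killed by \(u\), so \(M^{\vee_R}=\operatorname{Hom}_{k[[v]]}(k((v)),k((v))/k[[v]])\), which contains the nonzero quotient map. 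In fact \(M\) is minimax (\(k[[v]]\subseteq k((v))\) with Artinian quotient), hence \(R\)-reflexive by Theorem \ref{thm:main-characterization}. It is also \(R_{\mathfrak p}\)-reflexive, being simple. For \(\mathbb Z\subseteq\mathbb Q\), the map \(\mathbb Q\to\mathbb Q/\mathbb Z\to\mathbb Z(p^\infty)\) is another counterexample.

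Your iteration only produces \(x=s^kx'_k\) with \(x'_k\) depending on \(k\), that is, \(x\in\bigcap_k\mathfrak m^k\operatorname{im}f\). Since \(\operatorname{im}f\subseteq E_R(R/\mathfrak m)\) is typically divisible and not finitely generated, no Krull intersection argument applies (note \(\bigcap_k p^k\mathbb Z(p^\infty)=\mathbb Z(p^\infty)\)). The finite length of \(Rx'\) does not help, because \(x'\) changes with \(k\). What is true is only \(E_R(R/\mathfrak m)\otimes_R R_S=0\).

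The right replacement is the adjunction \(\operatorname{Hom}_R(M,E_R)\cong\operatorname{Hom}_{R_S}(M,J)\) with \(J=\operatorname{Hom}_R(R_S,E_R)\), valid for every \(R_S\)-module \(M\). Applied twice, it shows that \(R\)-reflexivity of \(M\) is reflexivity with respect to the injective \(R_S\)-module \(J\), not \(E'\). This is presumably the ``adjunction between localization and Hom'' in the paper's proof, which otherwise just cites Dailey--Marley. The summand of \(J\) at a maximal ideal \(\mathfrak qR_S\) has multiplicity \(\dim_{\kappa(\mathfrak q)}\operatorname{Hom}_R(\kappa(\mathfrak q),E_R)\), which is \(1\) exactly when \(\kappa(\mathfrak q)\) is \(R\)-reflexive. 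This multiplicity can be huge: for \(\mathbb Z\subseteq\mathbb Q\), \(J\cong\mathbb Q^{(2^{\aleph_0})}\). Moreover \(J\) may contain summands at nonmaximal primes of \(R_S\); in the \(k[[u,v]]\) example it contains copies of \(\operatorname{Frac}(R)\). These are the quantities any hypothesis on \(S\) has to control.

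\textbf{Even with a correct Step 1, the rest does not close.} Step 2 is designed to make the two evaluation maps agree for every \(S\), which is impossible. \(\mathbb Q\) is \(\mathbb Q\)-reflexive, but \(\operatorname{Hom}_{\mathbb Z}(\mathbb Q,\mathbb Q/\mathbb Z)\) has uncountable \(\mathbb Q\)-dimension, so \(\mathbb Q\) is not \(\mathbb Z\)-reflexive. Only \((1)\Rightarrow(2)\) can hold for arbitrary \(S\), which is how the ``Moreover'' clause has to be read. The ``controlled shape / mixed terms vanish'' sentence is not an argument even for that direction.

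In Step 3, when \(S=R\setminus\mathfrak p\) with \(\mathfrak p\) nonmaximal, \(E''=E_R(R/\mathfrak p)\) is the injective cogenerator of \(R_{\mathfrak p}\). So every nonzero \(R_S\)-module has nonzero \(E''\)-dual, and there is nothing to ``rule out''. Nor does nonminimality suffice for the converse. Take \(R=k[[u,v,w]]\) and \(\mathfrak p=(u)\). The simple \(R_{\mathfrak p}\)-module \(\kappa(\mathfrak p)=\operatorname{Frac}(k[[v,w]])\) is \(R_{\mathfrak p}\)-reflexive. But for every finitely generated \(N\subseteq\kappa(\mathfrak p)\), the quotient \(\kappa(\mathfrak p)/N\) does not vanish at the nonmaximal prime \((u,v)\), hence is not Artinian. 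So \(\kappa(\mathfrak p)\) is not minimax and, by Theorem \ref{thm:main-characterization}, not \(R\)-reflexive. In terms of \(J\), the multiplicity of \(E_R(R/\mathfrak p)\) exceeds \(1\) here, whereas it is \(1\) for \(k[[u,v]]\). So the special-case clause, as stated, cannot be proved from nonminimality alone, and your Step 3 has no mechanism that could supply the missing hypothesis.
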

\begin{proof}
The proof uses the adjunction isomorphisms between localization and Hom, together with the fact that the minimal injective cogenerator of \( R \) and \( R_S \) are related by \( E_R \cong E_{R_S} \) when \( S \) satisfies the stated conditions. The details are given in \cite[Theorem 2.2]{Dailey2017}.
\end{proof}

\begin{corollary}
\label{cor:localization}
Let \( R \) be a Noetherian ring and let \( M \) be an \( R \)-module. If \( M \) is Matlis reflexive, then for any multiplicatively closed subset \( S \subseteq R \), the localization \( M_S \) is Matlis reflexive as an \( R_S \)-module. In particular, if \( M \) is reflexive, then \( M_{\mathfrak{p}} \) is reflexive as an \( R_{\mathfrak{p}} \)-module for every prime ideal \( \mathfrak{p} \subseteq R \).
\end{corollary}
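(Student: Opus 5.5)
The natural route is to reduce to the ``finitely generated plus Artinian'' picture of reflexivity, and then use the closure under extensions from Theorem~\ref{thm:krull-schmidt} over the ring \(R_S\). The second assertion is just the case \(S = R\setminus\mathfrak{p}\), so I only treat a general multiplicatively closed \(S\).

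A tempting shortcut is Theorem~\ref{thm:change-of-rings}: since \(M_S\) is an \(R_S\)-module, it would suffice to show that \(M_S\) is reflexive as an \(R\)-module. However, \(M_S\) is neither a submodule nor a quotient of \(M\) in general, so the Serre property gives nothing directly. I would therefore argue structurally instead.

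\textbf{Step 1 (structure of \(M\)).} For an arbitrary Noetherian \(R\), I would invoke the Belshoff--Slattery--Wickham description of reflexive modules \cite{Belshoff1996a}. It provides a finitely generated submodule \(N\subseteq M\) such that \(M/N\) is Artinian and \(R/\operatorname{Ann}_R(N)\) is a complete semilocal ring. This is the global analogue of Corollary~\ref{cor:main}, and the one external input the argument needs. Localizing the exact sequence \(0\to N\to M\to M/N\to 0\) gives an exact sequence
\[
0\to N_S\to M_S\to (M/N)_S\to 0
\]
of \(R_S\)-modules. By Theorem~\ref{thm:krull-schmidt}(1), applied over the Noetherian ring \(R_S\), it suffices to show that the two outer terms are \(R_S\)-reflexive.

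\textbf{Step 2 (the Artinian part).} Write \(A = M/N\). An Artinian module decomposes as \(A=\bigoplus_{i=1}^r \Gamma_{\mathfrak{m}_i}(A)\) over finitely many maximal ideals. Localization at \(S\) kills the components with \(\mathfrak{m}_i\cap S\neq\emptyset\). It leaves the others unchanged, because elements of \(S\) act bijectively on an \(\mathfrak{m}_i\)-torsion Artinian module when \(S\cap\mathfrak{m}_i=\emptyset\). Hence \(A_S\) is a direct summand of \(A\), and it is Artinian over \(R_S\) with support at finitely many maximal ideals of \(R_S\). On each such component the relevant local ring acts through its completion, so the classical Matlis theorem shows these components are reflexive. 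The injective hulls \(E_R(R/\mathfrak{m}_i)\) and \(E_{R_S}(R_S/\mathfrak{m}_iR_S)\) coincide, so the duals computed over \(R\) and over \(R_S\) agree. Finite direct sums then give \(R_S\)-reflexivity of \(A_S\).

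\textbf{Step 3 (the finitely generated part, main obstacle).} Set \(T=R/\operatorname{Ann}_R(N)\), a complete semilocal ring. Such a ring is a finite product \(T\cong\prod_j T_{\mathfrak{n}_j}\) of complete local rings. Its localization at the image of \(S\) simply discards the factors whose maximal ideal meets \(S\), so \(T_S\) is again complete semilocal. Moreover \(N_S\) is a finitely generated \(T_S\)-module, and \(T_S = R_S/\operatorname{Ann}_{R_S}(N_S)\).

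It remains to see that a finitely generated module over a complete semilocal quotient is reflexive over \(R_S\). I would split \(N_S\) along the product decomposition into finitely many modules over complete local rings. To each piece I would apply the classical Matlis theorem, as in the Examples following Theorem~\ref{thm:main-characterization}. This again uses the compatibility of the injective cogenerators of \(R_S\) and of its quotient on the relevant maximal ideals.

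This is the step I expect to need the most care: verifying that the Matlis dual over \(R_S\) of a module annihilated by an ideal \(J\) agrees with the dual over \(R_S/J\), since \(\operatorname{Hom}_{R_S}(-,E_{R_S})\) lands in \((0:_{E_{R_S}}J)\), the injective cogenerator of \(R_S/J\). Once this is done, Steps 2 and 3 combined with the extension property finish the proof.
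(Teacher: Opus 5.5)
Your Step 3 fails at the claim that localizing the complete semilocal ring \(T\) at \(S\) merely discards the local factors whose maximal ideal meets \(S\). A complete local factor is killed only if some element of \(S\) is nilpotent in it. Otherwise you get a genuine localization of a complete local ring, which is typically neither complete nor semilocal.

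This cannot be patched, because the corollary is false. Take \(R=k[[x,y]]\), \(M=R\) (finitely generated over a complete local ring, hence reflexive) and \(\mathfrak p=(y)\). Then \(M_{\mathfrak p}=R_{\mathfrak p}\) is a discrete valuation ring with residue field \(k((x))\), and it is not complete. Indeed, \(\widehat{R_{\mathfrak p}}=k((x))[[y]]\) contains \(\sum_{n\ge 0}x^{-n^2}y^n\), which is not of the form \(f/g\) with \(f,g\in R\), \(g\notin(y)\); compare the \(x\)-orders of the \(y^n\)-coefficients of \(g\cdot\sum_n x^{-n^2}y^n\) for large \(n\). Over \(R_{\mathfrak p}\) the dual of \(R_{\mathfrak p}\) is \(E'=E_{R_{\mathfrak p}}(k((x)))\). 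So \(\delta_{R_{\mathfrak p}}\) is the canonical map \(R_{\mathfrak p}\to\operatorname{Hom}_{R_{\mathfrak p}}(E',E')\cong\widehat{R_{\mathfrak p}}\), which is not surjective. Hence even the ``in particular'' clause fails. For \(S=\{x^j\}\) it is worse: \(R_x\) has infinitely many maximal ideals \(\mathfrak n\) (for instance those generated by \(y-x^j\), \(j\ge 1\)). Then \(R_x^{\vee\vee}\cong\prod_{\mathfrak n}\widehat{(R_x)_{\mathfrak n}}\), into which the domain \(R_x\) maps diagonally and far from surjectively.

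Two further remarks. First, the justification of Step 2 is also unsound as written. \(\widehat R\) acts on every Artinian module over a local ring \(R\), yet for non-complete \(R\) the module \(E_R(R/\mathfrak m)\) is not reflexive: its double dual \(\operatorname{Hom}_R(\widehat R,E)\) is strictly larger than \(E\). What actually rescues the Artinian part is that \(M/N\) is reflexive as a quotient of \(M\), and \((M/N)_S\) is a direct summand of \(M/N\). Relatedly, the correct global criterion (Belshoff--Enochs--Garc\'{\i}a Rozas) requires \(R/\operatorname{Ann}_R(M)\), not merely \(R/\operatorname{Ann}_R(N)\), to be complete semilocal.

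Second, your distrust of the shortcut was well placed. The paper's proof is exactly that shortcut: it applies the forward direction of Theorem~\ref{thm:change-of-rings} to \(M_{\mathfrak p}\) without showing that \(M_{\mathfrak p}\) is \(R\)-reflexive. In the example above it is not, since \(R_{\mathfrak p}\) is not minimax over \(R\): for every finitely generated \(N\subseteq R_{\mathfrak p}\), the support of \(R_{\mathfrak p}/N\) contains all height-one primes other than \(\mathfrak p\). What survives is the case in which \(S\) meets no maximal ideal containing \(\operatorname{Ann}_R(M)\). Then \(S\) acts invertibly on \(M\), so \(M_S=M\), and \(R_S/\operatorname{Ann}_{R_S}(M)\cong R/\operatorname{Ann}_R(M)\) is still complete semilocal, so the global criterion applies over \(R_S\).
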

\begin{proof}
This follows immediately from the forward implication of Theorem \ref{thm:change-of-rings}, taking \( S \) to be the complement of \( \mathfrak{p} \). 
\end{proof}

These results have important consequences for the study of local cohomology and the behavior of reflexivity under localization, particularly in the context of rings with multiple maximal ideals.

The theory of Matlis reflexive modules has recently been enriched by connections to model theory, particularly through the lens of pure-injectivity and linear compactness. These connections have provided new characterizations and deeper structural insights.

\begin{proposition}[Krause \cite{Krause2025}]
\label{prop:model-theoretic}
A module \( M \) is Matlis reflexive if and only if it is pure-injective and has no infinite direct sum of nonzero submodules as a direct summand. Equivalently, \( M \) is pure-injective and every family of nonzero submodules has a maximal element with respect to inclusion.
\end{proposition}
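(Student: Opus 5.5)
We prove the two implications of the first assertion separately, and then treat the ``equivalently'' clause as a reformulation of the finiteness condition in the presence of pure-injectivity.

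\emph{Forward direction.} Suppose \( M \) is Matlis reflexive. Then \( \delta_M \colon M \to (M^\vee)^\vee \) is an isomorphism, so \( M \) is isomorphic to the Matlis dual \( N^\vee \) with \( N = M^\vee \). By Lemma \ref{lem:pure-injective}, \( M \) is pure-injective. Now suppose \( M = X \oplus Y \) with \( X = \bigoplus_{i \in I} X_i \) an infinite direct sum of nonzero submodules. Regrouping, \( M \) is itself a direct sum of the modules \( X_i \) (for \( i \in I \)) together with \( Y \). Since \( I \) is infinite, this contradicts Lemma \ref{lem:no-infinite-sums}. Hence \( M \) has no such summand.

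\emph{Converse.} Suppose \( M \) is pure-injective and has no infinite direct sum of nonzero submodules as a direct summand. We follow the structure of the proof of Theorem \ref{thm:krull-schmidt}, read in reverse.
\begin{enumerate}
\item Use the decomposition theorem for pure-injective modules cited there to write \( M = M' \oplus M'' \). Here \( M' \) is the pure-injective envelope of a direct sum \( \bigoplus_{j \in J} U_j \) of indecomposable modules, and \( M'' \) is continuous.
\item Show that \( J \) is finite. If \( J \) were infinite, we would aim to extract an infinite direct sum of nonzero submodules that is a direct summand of \( M \), contradicting the hypothesis. Once \( J \) is finite, the sum \( \bigoplus_{j} U_j \) is already pure-injective, so \( M' \) equals it.
\item Show that \( M'' = 0 \). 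A nonzero continuous pure-injective module has no indecomposable summands. We would try to split it repeatedly, \( M'' = A_1 \oplus B_1 \), \( B_1 = A_2 \oplus B_2 \), and so on, producing an infinite direct sum \( \bigoplus_k A_k \) of nonzero modules. We then need this sum to be a direct summand of \( M \), again contradicting the hypothesis.
\item Conclude that \( M \cong U_1 \oplus \cdots \oplus U_n \) is a finite direct sum of indecomposable pure-injective modules, each with local endomorphism ring by \cite[Corollary 4.4]{Krause2025}. By Proposition \ref{prop:closure}, it then suffices to show that each \( U_j \) is reflexive.
\item For each \( U_j \), establish linear compactness and then invoke \cite[Proposition 6.1]{Krause2025}.
\end{enumerate}

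\emph{The ``equivalently'' clause.} The maximality condition on families of nonzero submodules rules out infinite strictly ascending chains. An infinite direct sum \( \bigoplus_i X_i \) yields such a chain via the partial sums \( X_1 \subset X_1 \oplus X_2 \subset \cdots \). So the maximality condition implies the no-infinite-sum condition. For the other direction, we would aim to show, for pure-injective \( M \), that the no-infinite-sum condition yields maximal elements, by reducing to the finitely many indecomposable summands found in step 4.

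\emph{Main obstacles.} We expect two points to be hardest. The first is the infinite-summand constructions in steps 2 and 3. An infinite direct sum of summands of \( M \) need not itself be a summand; it is only pure in \( M \), and its pure-injective envelope is what splits off. To handle this, we would first try to show that the pure-injective envelope of an infinite direct sum again contains an infinite direct sum of nonzero summands. The second is the passage from ``indecomposable pure-injective'' to ``linearly compact'' in step 5. Here we would first try to exploit that the endomorphism ring is local together with the duality between \( U_j \) and \( U_j^\vee \).
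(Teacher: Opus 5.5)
Your forward direction is fine, and it is exactly the paper's argument (Lemma \ref{lem:pure-injective} plus Lemma \ref{lem:no-infinite-sums}). The converse, however, is false. It breaks exactly at your step 5: an indecomposable pure-injective module need not be linearly compact or Matlis reflexive, even when its endomorphism ring is local.

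Take $R=\mathbb{Z}$ and $M=\mathbb{Q}$. It is injective, hence pure-injective, and uniform, hence indecomposable. So it has no direct summand that is an infinite direct sum of nonzero modules, your steps 1--4 are vacuous here, and $\operatorname{End}_{\mathbb{Z}}(\mathbb{Q})=\mathbb{Q}$ is a field. But $\mathbb{Q}^\vee=\operatorname{Hom}_{\mathbb{Z}}(\mathbb{Q},\mathbb{Q}/\mathbb{Z})$ is a $\mathbb{Q}$-vector space mapping onto $\operatorname{Hom}_{\mathbb{Z}}(\mathbb{Q},\mathbb{Z}(p^\infty))\cong\mathbb{Q}_p$, so it is uncountable. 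Since it has $\mathbb{Q}$ as a direct summand, $(\mathbb{Q}^\vee)^\vee$ maps onto $\operatorname{Hom}_{\mathbb{Z}}(\mathbb{Q},\mathbb{Q}/\mathbb{Z})=\mathbb{Q}^\vee$ and is uncountable too. Hence $\delta_{\mathbb{Q}}$ is not onto.

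Completeness of the ring does not rescue the statement. Over $R=k[[x,y]]$ the fraction field $K=E_R(R)$ is injective and indecomposable, with $\operatorname{End}_R(K)=K$. For every finitely generated $N\subseteq K$ we have $(K/N)_{(x)}=K/N_{(x)}\neq 0$, because by Nakayama the nonzero $x$-divisible module $K$ is not finitely generated over $R_{(x)}$. An Artinian module is supported only at the maximal ideal, so $K/N$ is never Artinian. Thus $K$ is not minimax, hence not reflexive by Theorem \ref{thm:main-characterization}.

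The same example shows the paper's one-line converse (``otherwise the complement would contain an infinite direct sum'') is not a proof either. The hypothesis restricts the summands of $M$, not those of $(M^\vee)^\vee$, and $\delta_{\mathbb{Q}}(\mathbb{Q})$ is a proper direct summand of its double dual with no contradiction.

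The ``equivalently'' clause is also false. Requiring every family of submodules to have a maximal element is the ascending chain condition, i.e.\ $M$ is Noetherian. Over the complete local ring $\mathbb{Z}_p$, take $E=E_{\mathbb{Z}_p}(\mathbb{F}_p)\cong\mathbb{Z}(p^\infty)$. It is Artinian, hence reflexive, and injective, yet $(0:_E p)\subset(0:_E p^2)\subset\cdots$ is a strictly increasing chain with no maximal member. Combined with the first clause, the statement would make every reflexive module Noetherian, contradicting the paper's own Artinian examples.

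Your observation that ACC forbids infinite direct sums gives only one implication. The reverse step you sketch, reducing to the finitely many indecomposable summands of step 4, fails on $E$, which is itself indecomposable. So no way of carrying out steps 2, 3 and 5 can succeed. A correct converse needs a genuinely stronger hypothesis than pure-injectivity plus the summand condition; over a complete local ring, for instance, minimaxness.
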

\begin{proof}
The forward direction follows from Lemma \ref{lem:pure-injective} and Lemma \ref{lem:no-infinite-sums}. For the converse, if \( M \) is pure-injective, then by Lemma \ref{lem:pure-injective}, it is a direct summand of \( (M^\vee)^\vee \). If \( M \) has no infinite direct sum of nonzero submodules, then the inclusion \( M \hookrightarrow (M^\vee)^\vee \) must be an isomorphism; otherwise, the complement would contain an infinite direct sum. Thus \( M \) is reflexive. 
\end{proof}

This model-theoretic characterization has opened new avenues for understanding the structure of reflexive modules, including applications to the theory of definable subcategories and to the study of modules over artinian algebras.

\section{Open Problems}

Despite the significant progress that has been made in the theory of Matlis reflexive modules, several important questions remain open. We highlight the following research directions that we believe are particularly promising:

\begin{enumerate}
\item {Characterization over non-complete rings:} While the minimax characterization (Theorem \ref{thm:main-characterization}) provides a complete description of Matlis reflexive modules over complete local rings, no such simple characterization is known for arbitrary Noetherian rings. The class of reflexive modules over a non-complete ring is substantially larger and more subtle, and a systematic study of this case is still lacking.

\item {Derived Matlis reflexivity:} The extension of Matlis duality to the derived category has been explored in the context of Grothendieck duality, but the notion of "derived reflexive" objects has not been fully developed. It would be interesting to characterize those complexes \( C^\bullet \) for which the canonical map \( C^\bullet \to \operatorname{RHom}_R(\operatorname{RHom}_R(C^\bullet, E), E) \) is a quasi-isomorphism.

\item {Non-commutative generalizations:} Matlis duality was originally developed for commutative Noetherian rings, and the minimal injective cogenerator \( E = \bigoplus_{\mathfrak{m}} E_R(R/\mathfrak{m}) \) relies crucially on commutativity. For non-commutative rings, the appropriate dualizing module may not exist as a direct sum of injective hulls of simple modules. Generalizing the theory to non-commutative settings, particularly to artinian algebras and finite-dimensional algebras over fields, would be a significant advance.

\item {Connections to tilting theory:} There is growing evidence that Matlis reflexive modules play a role in the theory of tilting and cotilting modules over Noetherian rings. A precise characterization of the tilting modules that are Matlis reflexive, and the relationship between reflexivity and the existence of tilting classes, remains to be explored.

\item {Computational aspects:} Given the importance of Matlis reflexive modules in local cohomology and algebraic geometry, there is a need for effective algorithms to decide whether a given module (presented, for example, by generators and relations) is Matlis reflexive. The development of such algorithms would have practical applications in computer algebra systems.
\end{enumerate}

\end{document}